\documentclass[a4paper, 12pt]{amsart}

\usepackage[a4paper,margin=25mm]{geometry}  
\usepackage{amsmath}
\usepackage{amssymb}
\usepackage{graphicx}
\usepackage{amsthm}
\usepackage{caption}
\usepackage{hyperref}
\usepackage[initials]{amsrefs}

\BibSpec{article}{%
  +{}{\PrintAuthors} {author}
  +{,}{ \textit} {title}
  +{,}{ } {journal}
  +{}{ \textbf} {volume}
  +{}{ \parenthesize} {date} 
  +{,}{ } {pages}
  +{,}{ } {note}
  +{.}{} {transition}
  +{}{ } {review}
}

\BibSpec{misc}{%
  +{}{\PrintAuthors} {author}
  +{,}{ \textit} {title}
  +{,}{ } {note}
  +{}{ \parenthesize} {date}
  +{.}{} {transition}
  +{}{ } {review}
}

\makeatletter
    
    \@addtoreset{equation}{section}
  \makeatother
\theoremstyle{plain}
\newtheorem{thm}{Theorem}[section]
\newtheorem*{thm*}{Theorem A}
\newtheorem*{thm**}{Theorem B}
\newtheorem{prop}[thm]{Proposition}

\newtheorem{lem}[thm]{Lemma}
\theoremstyle{definition}

\newtheorem{df}[thm]{Definition}
\newtheorem{ex}[thm]{Example}

\newcommand{\R}{\mathbb{R}}

\newcommand{\tr}{\mathop{\mathrm{tr}}\nolimits}

\newcommand{\rad}{\mathop{\mathrm{rad}}\nolimits}
\newcommand{\codim}{\mathop{\mathrm{codim}}\nolimits}

\newcommand{\Ker}{\mathop{\mathrm{Ker}}\nolimits}
\newcommand{\Hom}{\mathop{\mathrm{Hom}}\nolimits}

\newcommand{\ad}{\mathop{\mathrm{ad}}\nolimits}

\newcommand{\Der}{\mathop{\mathrm{Der}}\nolimits}

\newcommand{\dsum}{\displaystyle\sum}

\newcommand{\ddelta}{\mathop{\scriptstyle{\Delta}}\nolimits}

\def\vector#1{\mbox{\boldmath $#1$}} 

\renewcommand{\tilde}{\widetilde}

\begin{document}
\title{Non-degenerate bilinear forms and left-symmetric structures}
\author{Naoki Kato}

\subjclass[2020]{17A30, 17B60, 15A63, 53D05}
\keywords{Left-symmetric structures, bilinear forms, double extension, symplectic structures, cosymplectic structures}
\maketitle

\begin{abstract}
Chu proved that a symplectic structure $\omega$ on an even-dimensional Lie algebra induces a left-symmetric structure which is defined by $\omega(x\ddelta_{\omega} y,z)=-\omega(y,[x,z])$.
El Bourkadi and Mansouri proved that a cosymplectic structure on an odd-dimensional Lie algebra $\mathfrak{g}$ induces the left-symmetric structure, which is defined by using a linear isomorphism from $\mathfrak{g}$ to $\mathfrak{g}^*$ associated with the cosymplectic structure.
In this paper, for a non-degenerate bilinear form $\phi$ on a Lie algebra $\mathfrak{g}$, we give a necessary and sufficient condition for the product $\ddelta_{\phi}$ on $\mathfrak{g}$ defined by $\phi(x\ddelta_{\phi} y,z)=-\phi(y,[x,z])$ to be a left-symmetric structure.
We also prove that the left-symmetric structure $\ddelta_{\phi}$ is complete if and only if the Lie algebra $\mathfrak{g}$ is unimodular.
Moreover, we formulate the notion of double extension for non-degenerate bilinear forms and prove that a certain class of non-degenerate bilinear forms is obtained by double extension.
\end{abstract}

\section{Introduction}
For a connected Lie group $G$, a left-invariant flat and torsion-free affine connection on $G$ is called a left-invariant affine structure on $G$.
In the case where $G$ is simply connected, it is known that the Lie group $G$ admits a left-invariant affine structure if and only if its Lie algebra $\mathfrak{g}$ admits a left-symmetric structure.

A non-degenerate closed $2$-form on a Lie algebra $\mathfrak{g}$ is called a symplectic structure on $\mathfrak{g}$ and a Lie algebra equipped with a symplectic structure is called a symplectic Lie algebra.
It is known that a unimodular Lie algebra admitting a symplectic structure is solvable (\cite{Chu74}).
A classification of low-dimensional Lie algebras which admit symplectic structures has been given in \cite{Ova06} and \cite{KhaGozMed04}.

Chu \cite{Chu74} proved that a symplectic structure $\omega$ on a Lie algebra $\mathfrak{g}$ induces a left-symmetric structure on $\mathfrak{g}$.
Chu also proved that semisimple Lie algebras do not admit left-symmetric structures.
It follows that semisimple Lie algebras do not admit symplectic structures.
Some properties of the induced left-symmetric structures are given in \cite{DarMed96} and \cite{MedRev91}.

For a cosymplectic structure $(\alpha, \omega)$ on $\mathfrak{g}$, El Bourkadi and Mansouri \cite{BouMan24Lef}  proved that a product $\ddelta_{(\alpha,\omega)}$ on $\mathfrak{g}$ defined by
$$
\Phi_{(\alpha,\omega)}(x\ddelta_{(\alpha,\omega)} y)(z)=-\Phi_{(\alpha,\omega)}(y)([x,z])
$$
is a left-symmetric structure on $\mathfrak{g}$, where $\Phi_{(\alpha,\omega)}\colon  \mathfrak{g} \to \mathfrak{g}^*$ is the associated linear isomorphism defined by $\Phi_{(\alpha,\omega)}(x)= i_{x}\omega+\alpha(x)\alpha$.
They also gave a classification of cosymplectic structures on $3$- and $5$-dimensional Lie algebras.
For $3$-dimensional Lie algebras, they gave explicit descriptions of left-symmetric structures induced by cosymplectic structures (Corollary 4.3 in \cite{BouMan24Lef}).

In this paper, we consider a bilinear form $\phi\colon\mathfrak{g}\times\mathfrak{g}\to{\R}$ on a Lie algebra $\mathfrak{g}$ which satisfies the condition 
$$\phi_A([x,y],z)+\phi_A([y,z],x)+\phi_A([z,x],y)=\phi_S(x,[y,z]),
$$
where $\phi_S$ and $\phi_A$ are the symmetric part and the skew-symmetric part of $\phi$, respectively.
We call a non-degenerate bilinear form satisfying this condition an NDB structure and a Lie algebra equipped with an NDB structure an NDB Lie algebra.
For an NDB structure $\phi$ on $\mathfrak{g}$, we can define a product $\ddelta_\phi$ by $\phi(x\ddelta_{\phi} y,z)=-\phi(y,[x,z])$.
This definition is analogous to the definitions of the products in the symplectic and cosymplectic cases.
We prove that the product $\ddelta_{\phi}$ is a left-symmetric structure on $\mathfrak{g}$ (Theorem \ref{thm1}).
We also prove that the left-symmetric structure $\ddelta_{\phi}$ induced by an NDB structure $\phi$ is complete if and only if $\mathfrak{g}$ is unimodular, which is a generalization of Theorem 1.2 in \cite{MedRev91} for left-symmetric structures induced by symplectic structures.

The class of left-symmetric structures induced by NDB structures contains the class of left-symmetric structures induced by symplectic structures and cosymplectic structures (see Subsection \ref{subsection_cosymplectic}).
We show that this class properly contains the class of left-symmetric structures induced by symplectic structures (see Example \ref{ex1} and Subsection \ref{ex2}).

A Lie algebra equipped with an $\ad$-invariant non-degenerate symmetric form is called a quadratic Lie algebra.
Medina and Revoy \cite{MedRev85} proved that every $n$-dimensional non-abelian indecomposable solvable quadratic Lie algebra is obtained from an $(n-2)$-dimensional quadratic Lie algebra by an extension.
This method of constructing $n$-dimensional quadratic Lie algebras from $(n-2)$-dimensional quadratic Lie algebras is called quadratic double extension (see \cite{MedRev85}).
Bordemann gave a generalization of this method in \cite{Bor97}, which is called $T^*$-extension.

The notion of double extension for symplectic Lie algebras was introduced in \cite{MedRev91} and generalized in \cite{DarMed96}.
Medina and Revoy \cite{MedRev91} proved that every nilpotent symplectic Lie algebra is obtained from the zero Lie algebra by successive double extensions. 
El Bourkadi and Mansouri introduced the notion of double extension for Lie algebras equipped with cosymplectic structures and gave a condition under which a cosymplectic structure on a Lie algebra is obtained by double extension.

In Section \ref{section4}, we give several constructions of NDB structures using extension procedures.
In Subsection \ref{subsection_extension}, we prove that a certain extension of an NDB Lie algebra by a symplectic Lie algebra admits an NDB structure (see Proposition \ref{prop2}).
Conversely, we prove that an NDB structure satisfying a certain condition is obtained by this extension (Theorem \ref{thm3}).

In Subsection \ref{subsection_double_extension}, we formulate the notion of double extension for NDB Lie algebras and prove that every $n$-dimensional NDB Lie algebra which has a certain kind of central element is obtained from an $(n-2)$-dimensional NDB Lie algebra (see Definition \ref{def_double_extension} and Theorem \ref{thm_double_extension}).

\section{Preliminaries}
\subsection{Left-symmetric structures}
 Let $\mathfrak{g}$ be a finite-dimensional  Lie algebra over ${\R}$.
\begin{df}
A bilinear product $\ddelta \colon  \mathfrak{g}\times \mathfrak{g}\to\mathfrak{g}$ is said to be a left-symmetric structure on $\mathfrak{g}$ if $\ddelta$ satisfies the following equations:
\begin{align}
x\ddelta(y\ddelta z)-(x \ddelta y)\ddelta z&= y\ddelta (x\ddelta z)-(y\ddelta x)\ddelta z \quad \text{and} \label{df1-1}\\
[x,y]&=x\ddelta y-y\ddelta x\label{df1-2}
\end{align}
for any $x,y,z\in \mathfrak{g}$.

Two left-symmetric structures $\ddelta$ on $\mathfrak{g}$ and $\ddelta'$ on  $\mathfrak{g}'$ are said to be isomorphic if there exists an isomorphism of Lie algebras $\phi\colon\mathfrak{g}\to\mathfrak{g}'$ such that $\phi(x\ddelta y)=\phi(x)\ddelta' \phi(y)$ for any $x,y\in\mathfrak{g}$.
 
\end{df}

We note that equation \eqref{df1-1} is equivalent to the equation $(x,y,z)=(y,x,z)$, where $(x,y,z)$ is the associator of $\ddelta$ defined by $(x,y,z)=x\ddelta(y\ddelta z)-(x\ddelta y)\ddelta z$.

\begin{df}
A left-symmetric structure $\ddelta $ on $\mathfrak{g}$ is said to be complete if $r(x)\in\mathfrak{gl(g)}$ is nilpotent for any $x\in\mathfrak{g}$, where $r\colon\mathfrak{g}\to\mathfrak{gl(g)}$ is the right multiplication map of $\ddelta$ defined by $r(x)(y)=y\ddelta x$.
\end{df}

Left-symmetric structures on $\mathfrak{g}$ correspond to flat and torsion-free affine connections on the simply connected Lie group $G$ with Lie algebra $\mathfrak{g}$.
The completeness of $\ddelta$ corresponds to the completeness of the corresponding connection (see \cite{Hel79} and \cite{Kim86}).
 
\subsection{Symplectic structures and cosymplectic structures}

A non-degenerate $2$-form $\omega$ satisfying
\begin{equation}\label{def_symplectic}
\omega([x,y],z)+\omega([y,z], x)+\omega([z,x],y)=0
\end{equation}
is called a symplectic structure (or symplectic form) on $\mathfrak{g}$.
The equation \eqref{def_symplectic} means that $\omega$ is a closed $2$-form, that is, a $2$-cocycle in the Chevalley--Eilenberg complex with trivial coefficients.
Chu \cite{Chu74} proved that a product $\ddelta_{\omega} \colon \mathfrak{g}\times \mathfrak{g}\to \mathfrak{g}$ defined by $\omega(x\ddelta_{\omega} y,z)=-\omega(y,[x,z])$ is a left-symmetric structure on $\mathfrak{g}$.
The left-symmetric structure $\ddelta_{\omega}$ is complete if and only if $\mathfrak{g}$ is unimodular (Theorem 1.2 in \cite{MedRev91}).

For a symplectic structure $\omega$, we define the associated linear isomorphism $\Phi_{\omega}\colon \mathfrak{g}\to \mathfrak{g}^*$ by $\Phi_{\omega}(x)=\iota_x\omega$, where $\iota_x$ is the interior product by $x$.
Then the left-symmetric structure $\ddelta_\omega$ can be written as $x\ddelta_{\omega}  y=\Phi_{\omega}^{-1} (\ad^*(x)\Phi_\omega(y))$, where $\ad^*\colon\mathfrak{g}\to\mathfrak{gl}(g)$ is the coadjoint representation of $\mathfrak{g}$.

Let $\ell\colon\mathfrak{g}\to \mathfrak{gl}(\mathfrak{g})$ be the left multiplication map of $\ddelta_{\omega}$, which is defined by $\ell(x)(y)=x\ddelta_{\omega} y$.
Then $\ell$ can be written as 
$$\ell(x)=\Phi_{\omega}^{-1}\circ \ad^*(x)\circ \Phi_{\omega}.
$$

In analogy with the symplectic case, El Bourkadi and Mansouri \cite{BouMan24Lef} proved that cosymplectic structures define left-symmetric structures.

\begin{df}
Let $\alpha$ and $\omega$ be a $1$-form and a $2$-form on a $(2n+1)$-dimensional Lie algebra $\mathfrak{g}$, respectively.
A pair $(\alpha, \omega)$ is said to be a cosymplectic structure on $\mathfrak{g}$ if $\alpha$ and $\omega$ are closed forms and satisfy $\alpha\wedge \omega^n\not= 0$.
\end{df}

Define a linear map $\Phi_{(\alpha,\omega)}\colon \mathfrak{g}\to\mathfrak{g}^*$ by
$$
\Phi_{(\alpha,\omega)}(x) =\alpha(x)\alpha +\iota_x \omega
$$
and a product $\ddelta_{(\alpha,\omega)}$ on $\mathfrak{g}$ by
$$\Phi_{(\alpha,\omega)}(x\ddelta_{(\alpha,\omega)}y)(z)=-\Phi_{(\alpha,\omega)}(y)([x,z]).
$$
Then El Bourkadi and Mansouri proved that this product is a left-symmetric structure (see Theorem 2.4 in \cite{BouMan24Lef}).
The left multiplication map of this left-symmetric structure can also be written as 
$$\ell(x)=\Phi_{(\alpha,\omega)}^{-1}\circ \ad^*(x)\circ \Phi_{(\alpha,\omega)}.
$$

\section{Non-degenerate bilinear forms and left-symmetric structures}\label{section_non-dege}

\subsection{Left-symmetric structures induced by non-degenerate bilinear forms}

For a finite-dimensional vector space $V$ over ${\R}$, we naturally identify the space of bilinear forms $\Hom(V\otimes V, {\R})$ with $V^*\otimes V^*$.
The space $V^*\otimes V^*$ is decomposed as
$$
V^*\otimes V^*=S^2 V^*\oplus \bigwedge\nolimits^2 V^*,
$$
where $S^2V^*$ is the space of symmetric bilinear forms and $\bigwedge^2 V^*$ is the space of skew-symmetric bilinear forms on $V$.
For a bilinear form $\phi\in \Hom(V\otimes V, {\R})$, we denote by $\phi_S$ and $\phi_A$ the symmetric part and the skew-symmetric part, respectively.

\begin{df}
A bilinear form $\phi\in\Hom(V\otimes V, {\R})$ is said to be non-degenerate if the map 
$$
\phi^{\flat}_L\colon V\to V^*, \quad x\mapsto \phi(x,\, \cdot\, )
$$
is an isomorphism of vector spaces.
\end{df}
 
We remark that $\phi^{\flat}_L$ is an isomorphism if and only if the map
$$
\phi^{\flat}_R\colon V\to V^*, \quad x\mapsto \phi(\, \cdot \, , x)
$$
is an isomorphism.

Let $\mathfrak{g}$ be a Lie algebra and $\phi\in\Hom(\mathfrak{g}\otimes \mathfrak{g},{\R})$ a non-degenerate bilinear form on $\mathfrak{g}$.
We define a product $\ddelta_{\phi}$ on $\mathfrak{g}$ by 
\begin{equation}
\phi(x\ddelta_{\phi} y, z)=-\phi(y,[x,z]).  \label{df3}
\end{equation}
The left multiplication map associated with $\ddelta_{\phi}$ can be written as $\ell(x)=(\phi_L^{\flat})^{-1}\circ \ad^*(x)\circ \phi_L^{\flat}$ for each $x\in\mathfrak{g}$.

\begin{thm}\label{thm1}
The product $\ddelta_{\phi}$ is a left-symmetric structure on $\mathfrak{g}$ if and only if the symmetric part $\phi_S$ and the skew-symmetric part $\phi_A$ of $\phi$ satisfy
\begin{equation}
\phi_A([x,y],z)+\phi_A([y,z],x)+\phi_A([z,x],y)=\phi_S(x,[y,z]) \label{eq2}
\end{equation}
for any $x,y,z\in\mathfrak{g}$.
\end{thm}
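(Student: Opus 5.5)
The proof reduces left-symmetry of $\ddelta_\phi$ to the single identity \eqref{df1-2}, and then rewrites that identity through $\phi$.

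\emph{Step 1: \eqref{df1-1} holds automatically.} Equation \eqref{df1-1} is equivalent to
$$[\ell(x),\ell(y)]=\ell(x\ddelta_\phi y-y\ddelta_\phi x)\quad\text{for all } x,y\in\mathfrak{g},$$
where $\ell$ is the left multiplication map. By the remark preceding the theorem, $\ell(x)=(\phi_L^{\flat})^{-1}\circ\ad^*(x)\circ\phi_L^{\flat}$. This is the conjugate of the coadjoint representation by a fixed linear isomorphism, so $\ell$ is itself a representation: $[\ell(x),\ell(y)]=\ell([x,y])$. Hence, once \eqref{df1-2} holds, \eqref{df1-1} follows. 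So $\ddelta_\phi$ is a left-symmetric structure if and only if $x\ddelta_\phi y-y\ddelta_\phi x=[x,y]$ for all $x,y$.

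\emph{Step 2: rewrite \eqref{df1-2} via $\phi$.} By non-degeneracy of $\phi$, \eqref{df1-2} is equivalent to
$$\phi(x\ddelta_\phi y-y\ddelta_\phi x-[x,y],z)=0\quad\text{for all } z.$$
Using \eqref{df3}, the left side equals
$$-\phi(y,[x,z])+\phi(x,[y,z])-\phi([x,y],z).$$
Now substitute $\phi=\phi_S+\phi_A$, using skew-symmetry to move the bracket into the first slot of $\phi_A$ and symmetry to move it into the second slot of $\phi_S$. Write
$$a=\phi_S(x,[y,z]),\qquad b=\phi_S(y,[z,x]),\qquad c=\phi_S(z,[x,y]).$$
Then \eqref{df1-2} becomes the condition
$$\phi_A([x,y],z)+\phi_A([y,z],x)+\phi_A([z,x],y)=a+b-c\quad\text{for all } x,y,z. \tag{$\ast$}$$

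\emph{Step 3: show $(\ast)$ is equivalent to \eqref{eq2}.} This is the only non-mechanical point, since $(\ast)$ does not visibly coincide with \eqref{eq2}. The key observation is that the left-hand side of both $(\ast)$ and \eqref{eq2} is invariant under cyclic permutations of $(x,y,z)$.

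Suppose \eqref{eq2} holds. Cyclic invariance of its left side forces its right side to be cyclically invariant as well, so $a=b=c$. Then $a+b-c=a$, and $(\ast)$ follows.

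Conversely, suppose $(\ast)$ holds. Cyclically permuting $(x,y,z)$ sends $(a,b,c)$ to $(b,c,a)$. Invariance of the left side then gives
$$a+b-c=b+c-a=c+a-b.$$
These equations yield $a=c$ and $b=a$, so again $a+b-c=a$, which is exactly \eqref{eq2}.

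Combining Steps 1–3 proves the theorem.
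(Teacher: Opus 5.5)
Your proposal is correct and follows essentially the paper's route. The reduction of left-symmetry to \eqref{df1-2} is the content of Lemma~\ref{lem1}: you obtain the associator identity more conceptually, from the fact that $\ell$ is conjugate to the representation $\ad^*$, whereas the paper checks it by a direct Jacobi-identity computation. Your cyclic-permutation argument forcing $a=b=c$ is precisely the extraction of the $\ad$-invariance \eqref{eq6} of $\phi_S$, which the paper obtains by comparing \eqref{eq4} with \eqref{eq5}.
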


We remark that the equation \eqref{eq2} implies the $\ad$-invariance of $\phi_S$:
\begin{equation}
\phi_S([x,y],z)+\phi_S(y,[x,z])=0. \label{eq1}
\end{equation}
Indeed, 
\begin{align*}
&\phi_S([x,y],z)+\phi_S(y,[x,z])\\
 &= \phi_A([z,x],y)+\phi_A([x,y],z)+\phi_A([y,z],x)+\phi_A([y,x],z)+\phi_A([x,z],y)+\phi_A([z,y],x)\\
&=0.
\end{align*}

For an $\ad$-invariant symmetric form $B\in S^2\mathfrak{g}^*$, we can define $J_B\in \bigwedge^3\mathfrak{g}^*$ by $J_B(x,y,z)=B(x,[y,z])$.
The map $J\colon B\mapsto J_B$ is the dual of the Koszul map and $J_B$ is a $3$-cocycle (see \cite{Cor16}).
The equation \eqref{eq2} means that the $3$-cocycle $J_{\phi_S}$ satisfies $J_{\phi_S}=-d\phi_A$.
In particular, $J_{\phi_S}$ is a $3$-coboundary.

\begin{lem}\label{lem1}
The product $\ddelta_{\phi}$ is a left-symmetric structure on $\mathfrak{g}$ if and only if $\phi$ satisfies
\begin{equation}
\phi([x,y],z)+\phi(y,[x,z])=\phi(x,[y,z]). \label{eq3}
\end{equation}
\end{lem}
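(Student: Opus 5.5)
Since $\ell(x)=(\phi_L^{\flat})^{-1}\circ\ad^*(x)\circ\phi_L^{\flat}$ and $\ad^*$ is a representation, we have $[\ell(x),\ell(y)]=\ell([x,y])$ automatically. Writing out this identity gives $x\ddelta_\phi(y\ddelta_\phi z)-y\ddelta_\phi(x\ddelta_\phi z)=[x,y]\ddelta_\phi z$. The associator condition \eqref{df1-1} reads
$$x\ddelta_\phi(y\ddelta_\phi z)-y\ddelta_\phi(x\ddelta_\phi z)=(x\ddelta_\phi y-y\ddelta_\phi x)\ddelta_\phi z.$$
So whenever \eqref{df1-2} holds, \eqref{df1-1} is automatic. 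Hence $\ddelta_\phi$ is left-symmetric if and only if it is torsion-free, i.e.\ $x\ddelta_\phi y-y\ddelta_\phi x=[x,y]$ for all $x,y$. For the converse direction nothing more is needed, since a left-symmetric structure satisfies \eqref{df1-2} by definition.

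Next we translate torsion-freeness through $\phi$. By non-degeneracy of $\phi$ (via $\phi^\flat_R$, i.e.\ testing in the first slot against all $z$), the identity $x\ddelta_\phi y-y\ddelta_\phi x=[x,y]$ is equivalent to
$$\phi(x\ddelta_\phi y,z)-\phi(y\ddelta_\phi x,z)=\phi([x,y],z)\quad\text{for all }z.$$
Using \eqref{df3}, the left-hand side equals $-\phi(y,[x,z])+\phi(x,[y,z])$. So the condition becomes $\phi([x,y],z)+\phi(y,[x,z])=\phi(x,[y,z])$, which is exactly \eqref{eq3}.

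The only point needing care is the first step: checking that $\ell$ is a Lie algebra homomorphism, with the correct sign convention $\ad^*(x)\xi=-\xi\circ\ad(x)$ that matches \eqref{df3}. This follows by conjugating $\ad^*$ by the fixed isomorphism $\phi_L^\flat$. One also has to note that non-degeneracy lets us test equality in the first argument of $\phi$, which holds since $\phi^\flat_R$ is an isomorphism as remarked above.
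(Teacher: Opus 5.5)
Your proof is correct and follows essentially the same route as the paper: \eqref{df1-2} is equivalent to \eqref{eq3} by pairing with $\phi$ and using non-degeneracy, and \eqref{df1-1} then follows from \eqref{df1-2} together with the Jacobi identity. The only difference is packaging. The paper verifies $\phi((x,y,z)-(y,x,z),w)=0$ by a direct Jacobi computation, whereas you observe that $\ell(x)=(\phi^{\flat}_L)^{-1}\circ\ad^*(x)\circ\phi^{\flat}_L$ is a representation, so $[\ell(x),\ell(y)]=\ell([x,y])$, which is the same computation expressed conceptually.
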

\begin{proof}
For $x,y,z\in \mathfrak{g}$, we have
$$
\phi(x\ddelta_{\phi} y-y\ddelta_{\phi} x,z)=-\phi(y,[x,z])+\phi(x,[y,z]).
$$
Hence the equation \eqref{df1-2} implies the equation \eqref{eq3}.

Conversely, since $\phi$ is non-degenerate, the equation \eqref{eq3} implies the equation \eqref{df1-2}.
 
Suppose that the equation \eqref{eq3} holds.
Then, for $x,y,z,w\in\mathfrak{g}$, we have 
\begin{align*}
&\phi((x,y,z)-(y,x,z),w)\\
&=\phi(x\ddelta_{\phi}(y\ddelta_{\phi} z),w)-\phi((x\ddelta_{\phi} y)\ddelta_{\phi} z,w)-\phi(y\ddelta_{\phi}(x\ddelta_{\phi} z),w)+\phi((y\ddelta_{\phi} x)\ddelta_{\phi} z,w)\\
&=-\phi(y\ddelta_{\phi} z,[x,w])+\phi(z, [x\ddelta_{\phi} y, w])+\phi(x\ddelta_{\phi} z,[y,w]) -\phi(z,[y\ddelta_{\phi} x,w])\\
&=\phi(z,[y,[x,w]])-\phi(z,[x,[y,w]])+\phi(z,[x\ddelta_{\phi} y-y\ddelta_{\phi} x,w])\\
&= \phi(z,[y,[x,w]])+\phi(z,[x,[w,y]])+\phi(z,[w,[y,x]])\\
&=0.
\end{align*}
Therefore, $\ddelta_{\phi}$ satisfies the equation \eqref{df1-1}.

\end{proof}

\begin{proof}[{\bf Proof of Theorem \ref{thm1}}]

By Lemma \ref{lem1}, it suffices to prove that $\phi$ satisfies the equation \eqref{eq3} if and only if $\phi_S$ and $\phi_A$ satisfy the equation \eqref{eq2}.

If $\phi_S$ and $\phi_A$ satisfy the equation \eqref{eq2}, then $\phi$ satisfies
\begin{align*}
\phi([x,y],z)+\phi(y,[x,z])
&=\phi_A([x,y],z)+\phi_A(y,[x,z])+\phi_S([x,y],z)+\phi_S(y,[x,z])\\
&=-\phi_A([y,z],x)+\phi_S(x,[y,z])+\phi_S([x,y],z)+\phi_S(y,[x,z])\\
&=\phi_A(x,[y,z])+\phi_S(x,[y,z])\\
&=\phi(x,[y,z])
\end{align*}
since $\phi_S$ is $\ad$-invariant.

Conversely, suppose that $\phi$ satisfies the equation \eqref{eq3}.
By equations
\begin{align*}
\phi(x,[y,z])&=\phi([x,y],z)+\phi(y,[x,z]) \quad \text{ and}\\
\phi(z,[x,y])&=\phi([z,x],y)+\phi(x,[z,y]),
\end{align*}
we obtain 
\begin{align}
&\phi_S(x,[y,z])+ \phi_A(x,[y,z])\label{eq4} \\
&=\phi_S([x,y],z)+\phi_S(y,[x,z])+\phi_A([x,y],z)+\phi_A(y,[x,z])\quad  \text{ and} \notag\\
& \phi_S(z,[x,y])+\phi_A(z,[x,y]) \label{eq5}\\
&=\phi_S([z,x],y)+\phi_S(x,[z,y])+\phi_A([z,x],y)+\phi_A(x,[z,y]). \notag
\end{align}

Since $\phi_S$ is symmetric and $\phi_A$ is skew-symmetric, the equation \eqref{eq5} is equivalent to the equation
$$
\phi_S([x,y],z)-\phi_A([x,y],z)=-\phi_S(y,[x,z])-\phi_S(x,[y,z])+\phi_A(y,[x,z])-\phi_A(x,[y,z]).
$$
By comparing this equation with the equation \eqref{eq4}, we have
\begin{equation}
\phi_S(y,[x,z])+\phi_S([x,y],z)=0. \label{eq6}
\end{equation}
By substituting this equation  into the equation \eqref{eq4}, we obtain the equation \eqref{eq2}. 
\end{proof}

\begin{df}
We call a non-degenerate bilinear form $\phi$ on a Lie algebra $\mathfrak{g}$ which satisfies the equation \eqref{eq2} an NDB structure on $\mathfrak{g}$ and call a Lie algebra equipped with an NDB structure an NDB Lie algebra.
\end{df}

If an NDB structure $\phi$ satisfies $\phi_A=0$, then the equation \eqref{eq2} implies that $\mathfrak{g}$ is abelian.
If  $\phi$ satisfies $\phi_S=0$, then $\phi_A$ is a non-degenerate closed $2$-form, that is, $\phi_A$ is a symplectic structure.
Hence the notion of NDB structures is a generalization of that of symplectic structures.
In Subsection \ref{subsection_cosymplectic}, we describe the relationship between NDB structures and cosymplectic structures.

We give examples of NDB structures and the left-symmetric structures induced by them. 
\begin{ex}\label{ex1}
Let $\mathfrak{g}=\mathfrak{aff}({\R})$ be the Lie algebra of the group of affine transformations of ${\R}$, which is the $2$-dimensional non-abelian Lie algebra.
We consider $\mathfrak{g}$ as a subalgebra of $\mathfrak{gl}({\R}^2)$ and put $t=\begin{pmatrix}1&0\\0&0\end{pmatrix}$ and $x=\begin{pmatrix}0&1\\0&0\end{pmatrix}$.
Then $\mathfrak{g}={\R}\langle t, x\rangle$ and $[t,x]=x$.

Let $\{t^*,x^*\}$ be the dual basis of $\{t,x\}$.
Then $\phi_S=t^*\otimes t^*$ is an $\ad$-invariant symmetric form, which is the Killing form of $\mathfrak{g}$.
Since $[\mathfrak{g},\mathfrak{g}]={\R}\langle x\rangle$, $\phi_S$ satisfies $\phi_S(\mathfrak{g}, [\mathfrak{g}, \mathfrak{g}])=0$.
Hence the equation \eqref{eq2} means that $\phi_A$ is a closed form.

Put $\phi_A=t^*\wedge x^*=t^*\otimes x^*-x^*\otimes t^*$.
Then $\phi=t^*\otimes t^*+(t^*\otimes x^*-x^*\otimes t^*)$ is non-degenerate and satisfies the equation \eqref{eq2}.

The linear isomorphism $\phi_L^{\flat}\colon\mathfrak{g}\to\mathfrak{g}^*$ associated with $\phi$ is given by $\phi_L^{\flat}(t)=t^*+x^*$ and $\phi_L^{\flat}(x)=-t^*$.
The coadjoint representation $\ad^*\colon\mathfrak{g}\to\mathfrak{gl}(\mathfrak{g}^*)$ is represented by 
$$
\ad^*(t)=\begin{pmatrix} 0&0\\0&-1\end{pmatrix} \quad \text{and} \quad \ad^*(x)=\begin{pmatrix} 0&1\\0&0\end{pmatrix}
$$
with respect to the basis $\{t^*,x^*\}$.
Then the left multiplication map of the left-symmetric structure $\ddelta_{\phi}$ is given by
\begin{align*}
\ell^{\phi}(t)&=(\phi_L^{\flat})^{-1}\circ \ad^*(t)\circ \phi_L^{\flat}=\begin{pmatrix}-1&0\\-1&0\end{pmatrix} \quad \text{and}\\
\ell^{\phi}(x)&=(\phi_L^{\flat})^{-1}\circ \ad^*(x)\circ \phi_L^{\flat}=\begin{pmatrix} 0&0\\-1&0\end{pmatrix}.
\end{align*}	

We note that $\phi_A$ is a symplectic structure on $\mathfrak{aff}({\R})$ and the left multiplication map $\ell^{\phi_A}$ of the left-symmetric structure $\ddelta_{\phi_A}$ induced by $\phi_A$ is given by
$$
\ell^{\phi_A}(t)=\begin{pmatrix} -1&0\\0&0\end{pmatrix} \quad \text{and} \quad \ell^{\phi_A}(x)=\begin{pmatrix} 0&0\\-1&0\end{pmatrix}.
$$
The left-symmetric structure $\ddelta_{\phi}$ is isomorphic to $\mathcal{F}(-1)$ and the left-symmetric structure $\ddelta_{\phi_A}$ coincides with the structure $\mathcal{A}_2$ in the list in \cite{MedSalGir16}.
In particular, $\ddelta_{\phi}$ is not isomorphic to $\ddelta_{\phi_A}$ (see Section 3 of \cite{MedSalGir16}).
\end{ex}

\subsection{An example of NDB structures on $T^*\mathfrak{h}_3$}\label{ex2}

Let $\mathfrak{h}_3$ be the $3$-dimensional Heisenberg Lie algebra and let $\mathfrak{g}=T^*\mathfrak{h}_3$ be the cotangent Lie algebra of $\mathfrak{h}_3$.
Let $\{t,e_1,e_2\}$ be a basis of $\mathfrak{h}_3$ with $[t,e_2]=e_1$ and $\{z,e^1,e^2\}$ the dual basis of $\mathfrak{h}_3^*$.
Put $f_1=e^2, f_2=e^1$.
Then $\mathcal{B}=\{t,z,e_1,f_1,e_2,f_2\}$ is a basis of $T^*\mathfrak{h}_3$.
The Lie bracket of $T^*\mathfrak{h}_3$ is given by
$$
[t,e_2]=e_1,\quad 
[t,f_2]=-f_1, \quad \text{and}\quad 
[e_2,f_2]=z.
$$

For a Lie algebra $\mathfrak{g}$, the cotangent Lie algebra $T^*\mathfrak{g}$ admits an $\ad$-invariant non-degenerate symmetric form 
$$\phi_S(x_1+f_1, x_2+f_2)=f_1(x_2)+f_2(x_1),
$$
where $x_i\in\mathfrak{g}$ and $f_i\in\mathfrak{g}^*$ for $i=1,2$ (see Example 1 in \cite{Ova 16}).

For $\mathfrak{g}=\mathfrak{h}_3$, the $\ad$-invariant non-degenerate symmetric form $\phi_S$ on $T^*\mathfrak{h}_3$ can be written as
$$
\phi_S=(z^*\otimes t^*+t^*\otimes z^*)+(e_1^*\otimes f_2^*+f_2^*\otimes e_1^*)+(e_2^*\otimes f_1^*+f_1^*\otimes e_2^*),
$$
where $\{t^*,z^*,e_1^*,f_1^*,e_2^*,f_2^*\}$ is the dual basis  of $\mathcal{B}$.

By a direct computation, we have 
\begin{align*}
&J_{\phi_S}=t^*\wedge e_2^*\wedge f_2^* \quad \text{and}\\
&d(\alpha t^*\wedge z^*+\beta e_1^*\wedge f_2^*+(-1-\alpha+\beta)e_2^*\wedge f_1^* )=-t^*\wedge e_2^*\wedge f_2^*,
 \end{align*}
 where $\alpha,\beta\in{\R}$.
Hence 
$$
\phi^{(\alpha,\beta)}=\phi_S+\alpha t^*\wedge z^*+\beta e_1^*\wedge f_2^*+(-1-\alpha+\beta)e_2^*\wedge f_1^*
$$
satisfies the equation \eqref{eq2}.
The bilinear form $\phi^{(\alpha,\beta)}$ is non-degenerate if and only if $\alpha$ and $\beta$ satisfy $\alpha\not= \pm 1, \beta\not= \pm 1$, and $\alpha-\beta\not= 0, -2$.

Suppose that $\alpha, \beta$ satisfy $\alpha\not= \pm 1, \beta\not= \pm 1$, and $\alpha-\beta\not= 0, -2$.
The left-symmetric structure $\ddelta_{\phi^{(\alpha,\beta)}}$ on $T^*\mathfrak{h}_3$ induced by the NDB structure $\phi^{(\alpha, \beta)}$ is given by the following nonzero products:
\begin{alignat*}{2}
t \ddelta_{\phi^{(\alpha,\beta)}} e_2&=\dfrac{-\alpha+\beta}{1+\beta}e_1.  &\qquad 
t \ddelta_{\phi^{(\alpha,\beta)}} f_2&=\dfrac{-1+\beta}{2+\alpha-\beta} f_1. \\
e_2 \ddelta_{\phi^{(\alpha,\beta)}} t&=\dfrac{-1-\alpha}{1+\beta} e_1.   &\qquad
e_2 \ddelta_{\phi^{(\alpha,\beta)}} f_2&=\dfrac{1-\beta}{1-\alpha} z.\\
f_2 \ddelta_{\phi^{(\alpha,\beta)}} t&=\dfrac{1+\alpha}{2+\alpha-\beta} f_1. & \qquad 
f_2 \ddelta_{\phi^{(\alpha,\beta)}} e_2 &=\dfrac{\alpha-\beta}{1-\alpha} z.
\end{alignat*}

We put $\overline{\alpha}_i=\dfrac{-\alpha_i+\beta_i}{1+\beta_i}$ and $\overline{\beta}_i=\dfrac{-1+\beta_i}{2+\alpha_i-\beta_i}$ for $i=1,2$.
We remark that, for $n=2$, Theorem 3.2 in \cite{Kat25-2} remains valid without conditions (3.3) and (3.4) in \cite{Kat25-2}.
Suppose that $\alpha_i$ and $\beta_i$ satisfy $-2\alpha_i+\beta_i\not= 1$ and $\alpha_i+\beta_i\not= 0$.
Then $\overline{\alpha}_i$ and $\overline{\beta}_i$ satisfy the remaining hypotheses of Theorem 3.2 in \cite{Kat25-2}.
Hence, by Theorem 3.2 in \cite{Kat25-2}, $\ddelta_{\phi^{(\alpha_1,\beta_1)}} $ is isomorphic to $\ddelta_{\phi^{(\alpha_2, \beta_2)}}$  if and only if $(\overline{\alpha}_1,\overline{\beta}_1)$ coincides with either $(\overline{\alpha}_2, \overline{\beta}_2)$ or $(-\overline{\beta}_2, -\overline{\alpha}_2)$.
This condition is equivalent to the condition that $(\alpha_1,\beta_1)$ coincides with either $(\alpha_2,\beta_2)$ or  $(\alpha_2, \alpha_2-\beta_2+1)$.

Therefore, the family of left-symmetric structures $\ddelta_{\phi^{(\alpha,\beta)}}$ obtained from NDB structures $\phi^{(\alpha,\beta)}$ contains infinitely many pairwise non-isomorphic left-symmetric structures on $T^*\mathfrak{h}_3$.

We prove that the family of left-symmetric structures $\ddelta_{\phi^{(\alpha,\beta)}}$ contains infinitely many left-symmetric structures which are not isomorphic to any left-symmetric structure induced by a symplectic structure on $T^*\mathfrak{h}_3$.
The Lie algebra $T^*\mathfrak{h}_3$ admits the following symplectic structures (see No.\ 18 in the list in \cite{KhaGozMed04}):
\begin{align*}
\omega_1(\lambda)&=t^*\wedge z^*-\lambda e_2^*\wedge f_1^* -(\lambda -1)e_1^*\wedge f_2^* \quad (\lambda\not= 0,1).\\
\omega_2(\lambda)&=\lambda t^* \wedge z^* -t^*\wedge f_1^* + 2\lambda e_1^*\wedge f_2^* +e_2^*\wedge z^*+ \lambda e_2^*\wedge f_1^*  \quad (\lambda \not= 0).\\
\omega_3&=-t^*\wedge z^*-2e_1^*\wedge f_2^*-e_2^*\wedge f_1^*+f_1^*\wedge f_2^*.
\end{align*}
Any symplectic structure on $T^*\mathfrak{h}_3$ is isomorphic to one of the above symplectic structures (Theorem 5 in \cite{KhaGozMed04}).

The  matrix representations of the associated linear isomorphisms $\Phi_{\omega_1(\lambda)}, \Phi_{\omega_2(\lambda)}$, and $\Phi_{\omega_3}$ of $\omega_1(\lambda), \omega_2(\lambda)$, and $\omega_3$ with respect to the basis $\mathcal{B}$ and its dual basis are given by the following:
\begin{alignat*}{2}
\Phi_{\omega_1(\lambda)}&=\begin{pmatrix}0&-1&0&0&0&0\\1&0&0&0&0&0\\0&0&0&0&0&\lambda -1 \\0&0&0&0&-\lambda &0\\0&0&0&\lambda &0&0\\0&0&-(\lambda -1)&0&0&0\end{pmatrix}.
&\quad 
\Phi_{\omega_2(\lambda)}&=\begin{pmatrix}0&-\lambda&0&1&0&0\\ \lambda &0&0&0&1&0\\0&0&0&0&0&-2\lambda \\ -1&0&0&0&\lambda &0\\0&-1&0&-\lambda &0&0\\0&0&2\lambda &0&0&0\end{pmatrix}.\\
\Phi_{\omega_3}&=\begin{pmatrix}0&1&0&0&0&0\\-1&0&0&0&0&0\\ 0&0&0&0&0&2\\0&0&0&0&-1&-1\\ 0&0&0&1&0&0\\0&0&-2&1&0&0\end{pmatrix}.
\end{alignat*}
Then the left-symmetric structures $\ddelta_{\omega_1(\lambda)}, \ddelta_{\omega_2(\lambda)}$, and $\ddelta_{\omega_3}$ are given by the following nonzero products:
\begin{alignat*}{2}
 t \ddelta_{\omega_1(\lambda)} e_2&=\dfrac{\lambda}{\lambda-1}e_1.  &\qquad 
t \ddelta_{\omega_1(\lambda)} f_2&=-\dfrac{\lambda -1}{\lambda} f_1. \\ 
e_2 \ddelta_{\omega_1(\lambda)} t&=\dfrac{1}{\lambda-1} e_1.   &\qquad
e_2 \ddelta_{\omega_1(\lambda)} f_2&=-(\lambda -1) z.\\
f_2 \ddelta_{\omega_1(\lambda)} t&= \dfrac{1}{\lambda}f_1. & \qquad 
f_2 \ddelta_{\omega_1(\lambda)} e_2 &=-\lambda z. \\[1em]
t \ddelta_{\omega_2(\lambda)} t&= -\dfrac{1}{2\lambda } e_1 &\qquad
t \ddelta_{\omega_2(\lambda)} e_2&=\dfrac{1}{2}e_1.  \\ 
t \ddelta_{\omega_2(\lambda)} f_2&=-\dfrac{2\lambda^2}{\lambda^2+1} f_1-\dfrac{2\lambda}{\lambda^2+1} z. &\qquad
e_2 \ddelta_{\omega_2(\lambda)} t&=-\dfrac{1}{2} e_1.   \\
e_2 \ddelta_{\omega_2(\lambda)} e_2&= -\dfrac{1}{2\lambda} e_1 &\qquad
e_2 \ddelta_{\omega_2(\lambda)} f_2&=-\dfrac{2\lambda}{\lambda^2+1} f_1+\dfrac{2\lambda^2}{\lambda^2+1} z.\\
f_2 \ddelta_{\omega_2(\lambda)} t&= \dfrac{1-\lambda^2}{\lambda^2+1}f_1 -\dfrac{2\lambda}{\lambda^2+1} z.  &\qquad 
f_2 \ddelta_{\omega_2(\lambda)} e_2 &=-\dfrac{2\lambda}{\lambda^2+1}f_1+\dfrac{\lambda^2-1}{\lambda^2+1} z.\\[1em]
t \ddelta_{\omega_3} e_2&=\dfrac{1}{2}e_1.  &\qquad 
t \ddelta_{\omega_3} f_2&=-\dfrac{1}{2} e_1-2f_1. \\
e_2 \ddelta_{\omega_3} t&=-\dfrac{1}{2} e_1.   &\qquad
e_2 \ddelta_{\omega_3}f_2&= 2 z.\\
f_2 \ddelta_{\omega_3} t&= -\dfrac{1}{2}e_1 -f_1. & \qquad 
f_2 \ddelta_{\omega_3} e_2 &=z.\\
f_2 \ddelta_{\omega_3} f_2 &=z. 
\end{alignat*}

If $\lambda\not= -1,2$, then $\alpha_\lambda=\dfrac{\lambda}{\lambda -1}$  and $\beta_\lambda=-\dfrac{\lambda-1}{\lambda} $ satisfy the hypotheses of Theorem 3.2 in \cite{Kat25-2}.
For any $\lambda \not= 0,1$, we have 
$$
\left(\dfrac{-\alpha+\beta}{1+\beta}, \dfrac{-1+\beta}{2+\alpha-\beta}\right) \not= \left(\dfrac{\lambda}{\lambda -1}, -\dfrac{\lambda-1}{\lambda} \right), \left(\dfrac{\lambda-1}{\lambda} , -\dfrac{\lambda}{\lambda -1} \right).
$$
Hence, by Theorem 3.2 in \cite{Kat25-2}, $\ddelta_{\phi^{(\alpha,\beta)}}$ is not isomorphic to $\ddelta_{\omega_1(\lambda)}$ if $-2\alpha+\beta\not= 1, \alpha+\beta\not= 0,$ and  $\lambda\not= -1,2$.

\begin{prop}
The left-symmetric structure $\ddelta_{\phi^{(\alpha,\beta)}}$ is isomorphic to neither $\ddelta_{\omega_2(\lambda)}$ nor $\ddelta_{\omega_3}$.
\end{prop}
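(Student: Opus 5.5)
The plan is to find an isomorphism invariant of left-symmetric structures that separates $\ddelta_{\phi^{(\alpha,\beta)}}$ from $\ddelta_{\omega_2(\lambda)}$ and $\ddelta_{\omega_3}$. The natural candidate is the set of idempotent-type or square-type data. In the table for $\ddelta_{\phi^{(\alpha,\beta)}}$ every square $x\ddelta x$ has a simple structure, whereas $\ddelta_{\omega_2(\lambda)}$ and $\ddelta_{\omega_3}$ have nonzero diagonal products such as $t\ddelta t$, $e_2\ddelta e_2$ and $f_2\ddelta f_2$.

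Concretely, I will write a general element $u=a t+b z+c e_1+d f_1+p e_2+q f_2$. From the table for $\phi^{(\alpha,\beta)}$ I compute
$$u\ddelta_{\phi^{(\alpha,\beta)}}u = ap\,\tfrac{-1-2\alpha+\beta}{1+\beta}\,e_1+aq\,\tfrac{\alpha+\beta}{2+\alpha-\beta}\,f_1+pq\,\tfrac{1-2\beta+\alpha}{1-\alpha}\,z .$$
This is a quadratic map $Q\colon\mathfrak{g}\to\mathfrak{g}$, and it is preserved by any isomorphism $\psi$ via $\psi(Q(u))=Q'(\psi(u))$. One invariant is the linear span of the image of $Q$, which is the span of all $x\ddelta x$; since $x\ddelta y+y\ddelta x=Q(x+y)-Q(x)-Q(y)$, this equals the span of the symmetrized products. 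Another invariant is the zero locus $\{u : Q(u)=0\}$.

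To avoid parameter-dependent degeneracies, I will instead use the following cleaner invariant. The symmetrized product $x\circ y=x\ddelta y+y\ddelta x$ defines a commutative algebra, and the subspace $\mathfrak{g}\circ\mathfrak{g}$ together with its annihilator $\{x : x\circ\mathfrak{g}=0\}$ are isomorphism invariants. For $\phi^{(\alpha,\beta)}$ we have $\mathfrak{g}\circ\mathfrak{g}\subset\langle e_1,f_1,z\rangle$, and the annihilator contains $\langle z,e_1,f_1\rangle$, which is $3$-dimensional.

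For $\omega_3$, we have $f_2\circ f_2=2z$, $t\circ f_2=-e_1-3f_1$ and $e_2\circ f_2=3z$. Also $t\circ e_2=0$, $t\circ t=0$ and $e_2\circ e_2=0$. For $\omega_2(\lambda)$ the squares $t\circ t$ and $e_2\circ e_2$ are nonzero. This suggests comparing the rank, or the degenerate locus, of the quadratic form $\mathfrak{g}/\mathrm{Ann}\to\mathfrak{g}\circ\mathfrak{g}$. For $\phi$, the induced symmetric bilinear map on $\langle t,e_2,f_2\rangle$ has zero diagonal, i.e.\ $t\circ t=e_2\circ e_2=f_2\circ f_2=0$, so the quadratic map has a large isotropic cone; in particular $t$, $e_2$ and $f_2$ are linearly independent mod the annihilator with vanishing squares. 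So the key step is to find an invariant count, for example whether there exist three independent vectors modulo $\mathrm{Ann}$ that are pairwise ``orthogonal'' or isotropic. The simplest choice is the dimension of $\{u : u\circ u=0\}$ restricted to a suitable affine slice, or better, whether $Q$ vanishes on a $2$-dimensional subspace not contained in $\mathrm{Ann}$.

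For $\phi$, the subspace $\langle t,z,e_1,f_1\rangle$ has $Q=0$, so $Q$ vanishes on a $4$-dimensional subspace. I expect that for $\omega_3$ and $\omega_2(\lambda)$ the maximal $Q$-isotropic subspace is only $3$- or $4$-dimensional but fails some finer property. The main obstacle is choosing an invariant that genuinely separates all parameter values, and I anticipate having to check the maximal isotropic subspaces of $Q$ explicitly. As a fallback, I would compare the right multiplications: $\dim\{x : r(x)=0\}$, i.e.\ the right annihilator, and the rank of $r$. For $\phi$, $r(e_1)=r(f_1)=r(z)=0$, and the rank of $r(t)$ is at most $2$. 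For $\omega_2(\lambda)$, $r(t)$ has rank $3$, since $t\ddelta t$, $e_2\ddelta t$ and $f_2\ddelta t$ are independent; this needs $1-\lambda^2\neq 0$ or a $z$-component, and the $z$-component is present because $\lambda\ne0$. Since the maximal rank of $r(x)$ over $x$ is an invariant, this separates $\omega_2(\lambda)$. For $\omega_3$, I would verify that some $r(x)$ has rank $3$ in the same way, or otherwise fall back to the isotropic-subspace invariant.
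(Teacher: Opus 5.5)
There is a genuine gap: you never produce an invariant that separates the structures, and the one concrete separating claim you make is false. In $\ddelta_{\omega_2(\lambda)}$ you have $t\ddelta_{\omega_2(\lambda)}t=-\frac{1}{2\lambda}e_1$ and $e_2\ddelta_{\omega_2(\lambda)}t=-\frac12 e_1$. These are proportional, so $r(t)$ has rank $2$, not $3$.

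More generally, write $x=at+pe_2+qf_2$ modulo the center $\langle z,e_1,f_1\rangle$. The $3\times 3$ block of $r(x)$ from $\langle t,e_2,f_2\rangle$ to $\langle e_1,f_1,z\rangle$ has identically vanishing determinant for all three of $\ddelta_{\phi^{(\alpha,\beta)}}$, $\ddelta_{\omega_2(\lambda)}$ and $\ddelta_{\omega_3}$. In all three cases $\{x : r(x)=0\}$ is exactly the center. So neither the maximal rank of $r(x)$ nor the right annihilator separates anything.

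The other invariants you list fare no better. In all three cases $\mathfrak{g}\circ\mathfrak{g}\subset\langle z,e_1,f_1\rangle$ and the annihilator contains $\langle z,e_1,f_1\rangle$. Also, for $\omega_2(\lambda)$ the quadratic map $Q$ vanishes on the $4$-dimensional subspace $\langle f_2,z,e_1,f_1\rangle$, just as it does on $\langle t,z,e_1,f_1\rangle$ for $\phi^{(\alpha,\beta)}$.

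The idea you mention and then drop, the zero set of $Q(u)=u\ddelta u$, is what handles $\omega_2(\lambda)$, and it is what the paper uses. The $e_1$-component of $u\ddelta_{\omega_2(\lambda)}u$ is $-\frac{1}{2\lambda}(a^2+p^2)$, so every square-zero vector lies in the proper subspace $\{a=p=0\}$. By contrast, every basis vector satisfies $w_j\ddelta_{\phi^{(\alpha,\beta)}}w_j=0$, so an isomorphism would send a basis into that subspace.

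For $\omega_3$ this invariant fails. The square $u\ddelta_{\omega_3}u=-aq\,e_1-3aq\,f_1+q(3p+q)\,z$ vanishes at $t$, $e_2$, $e_2-3f_2$ and on the center, so square-zero vectors span $\mathfrak{g}$. Worse, take admissible parameters with $\beta=1+2\alpha$ and $\alpha\neq-\tfrac13$. Then the zero set of $Q$ for $\phi^{(\alpha,\beta)}$ has exactly the same shape as for $\omega_3$: a $5$-dimensional and a $4$-dimensional subspace meeting in the center. So no count of isotropic subspaces can work uniformly.

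The missing ingredient is the mixed products. The paper uses the square-zero condition only to force an isomorphism $F$, which preserves the center, into one of three configurations. In each, exactly one of $F(t),F(e_2),F(f_2)$ has nonzero $f_2$-coordinate, and that one has zero $t$-coordinate and $f_2$-coordinate equal to $-3$ times its $e_2$-coordinate. The paper then compares $F(t)\ddelta_{\omega_3}F(e_2)=\overline{\alpha}F(e_1)$ and $F(t)\ddelta_{\omega_3}F(f_2)=\overline{\beta}F(f_1)$ with $[F(t),F(e_2)]=F(e_1)$ and $[F(t),F(f_2)]=-F(f_1)$. This forces $(\overline{\alpha},\overline{\beta})\in\{(-1,1),(2,-\tfrac12),(\tfrac12,-2)\}$, and no $(\alpha,\beta)$ attains any of these values.
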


\begin{proof}
Let $F\colon T^*\mathfrak{h}_3\to T^*\mathfrak{h}_3$ be an automorphism of the Lie algebra $T^*\mathfrak{h}_3$.
Put $w_1=t, w_2=z, w_3=e_1,w_4=f_1,w_5=e_2,$ and $w_6=f_2$ and let $(a_{ij})$ be the matrix representation of $F$ with respect to the basis $\{w_1, \ldots , w_6\}$.
Since $F$ preserves the center of the Lie algebra $T^*\mathfrak{h}_3$, we have $a_{ij}=0$ for $i=1,5,6$ and $j=2,3,4$.

First, we suppose that  $F$ satisfies $F(x\ddelta_{\phi^{(\alpha,\beta)}} y)=F(x)\ddelta_{\omega_2(\lambda)} F(y)$ for any $x,y\in T^*\mathfrak{h}_3$.
Since $w_j\ddelta_{\phi^{(\alpha,\beta)}} w_j=0$, we have
$$
F(w_j)\ddelta_{\omega_2(\lambda)} F(w_j)=F(w_j\ddelta_{\phi^{(\alpha,\beta)}} w_j)=0.
$$
The $e_1$-component of $F(w_j)\ddelta_{\omega_2(\lambda)} F(w_j)$ is $-\dfrac{1}{2\lambda}(a_{1j}^2+a_{5j}^2)$.
Hence $a_{1j}= a_{5j}=0$ for $j=1,\ldots ,6$.
This contradicts the invertibility of $(a_{ij})$.

Next, we suppose that $F(x\ddelta_{\phi^{(\alpha,\beta)}} y)=F(x)\ddelta_{\omega_3} F(y)$.
Since $w_j\ddelta_{\phi^{(\alpha,\beta)}} w_j=0$, we have
\begin{align*}
0
&=F(w_j\ddelta_{\phi^{(\alpha,\beta)}} w_j)\\
&=F(w_j)\ddelta_{\omega_3} F(w_j)\\
&=-a_{1j}a_{6j} e_1 -3a_{1j}a_{6j} f_1 +(3a_{5j}a_{6j}+a_{6j}^2)z.
\end{align*}
Hence equations
\begin{equation}
a_{1j}a_{6j}=0 \quad \text{and} \quad  a_{6j}(3a_{5j}+a_{6j})=0 \label{eqex1}
\end{equation}
hold for any $j=1,\ldots ,6$.

By the definition of $\ddelta_{\omega_3}$ and the Lie bracket of $T^*\mathfrak{h}_3$, we obtain the following equations:
\begin{align}
F(t)\ddelta_{\omega_3} F(e_2)
&=\dfrac{1}{2}\ (a_{11}a_{55}- a_{11}a_{65}-a_{51}a_{15}-a_{61}a_{15})e_1 \label{eqex2}\\ 
&\quad +(-2a_{11}a_{65}-a_{61}a_{15})f_1 +(2a_{51}a_{65}+a_{61}a_{55}+a_{61}a_{65}) z.\notag \\
F(t)\ddelta_{\omega_3}F(f_2)
&=\dfrac{1}{2}(a_{11}a_{56}-a_{11}a_{66}-a_{51}a_{16}-a_{61}a_{16})e_1\label{eqex3}\\
&\quad +(-2a_{11}a_{66}-a_{61}a_{16})f_1+(2a_{51}a_{66}+a_{61}a_{56}+a_{61}a_{66})z. \notag \\
[F(t),F(e_2)]
&=(a_{11}a_{55}-a_{51}a_{15})e_1-(a_{11}a_{65}-a_{61}a_{15})f_1+(a_{51}a_{65}-a_{61}a_{55})z. \label{eqex4}\\
[F(t), F(f_2)]
&=(a_{11}a_{56}-a_{51}a_{16})e_1-(a_{11}a_{66}-a_{61}a_{16})f_1 +(a_{51}a_{66}-a_{61}a_{56})z. \label{eqex5}
\end{align}

Since the matrix $(a_{ij})$ is invertible, equations \eqref{eqex1} imply that one of the following three cases holds:
\begin{itemize}
\item[(a)] $a_{65}=a_{66}=a_{11}=0$ and $a_{61}=-3a_{51}$.
\item[(b)] $a_{61}=a_{66}=a_{15}=0$ and $a_{65}=-3a_{55}$.
\item[(c)] $a_{61}=a_{65}=a_{16}=0$ and $a_{66}=-3a_{56}$.
\end{itemize}
In case (a), equations \eqref{eqex2}, \eqref{eqex3}, \eqref{eqex4}, and \eqref{eqex5}  imply
\begin{align*}
[F(t), F(e_2)]&= -F(t)\ddelta_{\omega_3} F(e_2) \quad \text{and} \\
[F(t),F(f_2)]&=-F(t)\ddelta_{\omega_3} F(f_2).
\end{align*}
Since $[t,e_2]=e_1, [t,f_2]=-f_1, t\ddelta_{\phi^{(\alpha,\beta)}} e_2=\overline{\alpha} e_1,$ and $t\ddelta_{\phi^{(\alpha,\beta)}} f_2=\overline{\beta} f_1$, we have 
\begin{align*}
[F(t), F(e_2)]&= F([t,e_2])=F(e_1), \\
F(t)\ddelta_{\omega_3}F(e_2)&=F(t\ddelta_{\phi^{(\alpha,\beta)}} e_2)=\overline{\alpha}F(e_1),\\
[F(t), F(f_2)]&= F([t,f_2])=-F(f_1), \quad \text{and}\\
F(t)\ddelta_{\omega_3}F(f_2)&=F(t\ddelta_{\phi^{(\alpha,\beta)}} f_2)=\overline{\beta}F(f_1).
\end{align*}
Hence $\overline{\alpha}$ and $\overline{\beta}$ satisfy $(\overline{\alpha}, \overline{\beta})=(-1,1)$.
This is a contradiction since there do not exist $\alpha$ and $\beta$ satisfying $\overline{\alpha}=-1$ and $\overline{\beta}=1$. 

In cases (b) and (c), a similar argument yields $(\overline{\alpha}, \overline{\beta})=\left(2,-\dfrac{1}{2}\right)$ and $(\overline{\alpha}, \overline{\beta})=\left(\dfrac{1}{2}, -2\right)$,  respectively.
In each case, we obtain a contradiction.
\end{proof}

\subsection{Completeness of left-symmetric structures}
For a left-symmetric structure $\ddelta_{\omega}$ induced by a symplectic structure $\omega$, Medina and Revoy \cite{MedRev91} proved that $\ddelta_{\omega}$ is complete if and only if $\mathfrak{g}$ is unimodular.
For a left-symmetric structure induced by an NDB structure, a similar result holds.

\begin{thm}\label{complete}
Let $\phi$ be an NDB structure on $\mathfrak{g}$ and $\ddelta_{\phi}$ the left-symmetric structure defined by \eqref{df3}.
Then $\ddelta_{\phi}$ is complete if and only if $\mathfrak{g}$ is unimodular.

\end{thm}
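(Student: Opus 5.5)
The plan is to reduce completeness to a trace condition on the right multiplications and then compute that trace directly from the formula $\ell(x)=(\phi_L^{\flat})^{-1}\circ \ad^*(x)\circ \phi_L^{\flat}$. The reduction uses a classical criterion of Helmstetter (see also Segal and \cite{Kim86}): a finite-dimensional left-symmetric algebra $(\mathfrak{g},\ddelta)$ is complete if and only if $\tr r(x)=0$ for every $x\in\mathfrak{g}$. This is presumably also the route taken by Medina and Revoy in the symplectic case (Theorem 1.2 in \cite{MedRev91}). Since Theorem \ref{thm1} already guarantees that $\ddelta_{\phi}$ is left-symmetric, the criterion applies.

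First, by equation \eqref{df1-2}, $x\ddelta_\phi y-y\ddelta_\phi x=[x,y]$. In terms of multiplication operators this reads $\ell(x)-r(x)=\ad(x)$, so
$$
r(x)=\ell(x)-\ad(x).
$$
Second, $\ell(x)$ is conjugate to $\ad^*(x)$ via the isomorphism $\phi_L^{\flat}$. Hence
$$
\tr \ell(x)=\tr \ad^*(x)=-\tr \ad(x),
$$
because $\ad^*(x)=-\ad(x)^{T}$ as a map on $\mathfrak{g}^*$. Combining the two steps gives
$$
\tr r(x)=-2\tr\ad(x)\quad\text{for all } x\in\mathfrak{g}.
$$
Therefore $\tr r(x)=0$ for all $x$ if and only if $\tr \ad(x)=0$ for all $x$, which is exactly unimodularity of $\mathfrak{g}$. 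The criterion then gives the equivalence in both directions.

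The only nontrivial ingredient is the criterion itself. If it is not to be quoted, its two directions are handled separately. The direction ``complete $\Rightarrow$ $\tr r(x)=0$'' is immediate, since nilpotent operators have zero trace. The converse is the main obstacle. The standard argument introduces the symmetric form $\tau(x,y)=\tr r(x\ddelta y)$. Using left-symmetry one shows that $\tr r(x)=0$ for all $x$ forces the right multiplications to be simultaneously nilpotent. Equivalently, the ideal generated by the associated ``trace'' structure is a right-nilpotent ideal, following Helmstetter's proof. I would cite this rather than reprove it, exactly as is done in the symplectic case in \cite{MedRev91}.
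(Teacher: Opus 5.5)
Your proposal is correct and follows the paper's proof. Both cite the criterion that completeness is equivalent to $\tr r(x)=0$ for all $x$, and both get $\tr r(x)=-2\tr\ad(x)$ from $r(x)=\ell(x)-\ad(x)$ and $\tr\ell(x)=-\tr\ad(x)$; the paper derives the latter from a small lemma on operators adjoint with respect to $\phi$, whereas you read it off directly from the conjugation $\ell(x)=(\phi_L^{\flat})^{-1}\circ\ad^*(x)\circ\phi_L^{\flat}$, which is the same fact.
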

\begin{proof}

It is known that a left-symmetric structure is complete if and only if $\tr(r(x))=0$ for every $x\in\mathfrak{g}$ (Theorem 3.31 in \cite{Bur06}, Proposition 1 in \cite{Gic05}).
Hence, for a left-symmetric structure induced by an NDB structure, it suffices to prove that $\tr(r(x))=0$ if and only if $\tr(\ad(x))=0$ for each $x\in\mathfrak{g}$.

We use the following elementary lemma from linear algebra.

\begin{lem}\label{lem2}
Suppose that $f, g\in\mathfrak{gl(g)}$ satisfy $\phi(f(x),y)=\phi(x,g(y))$ for any $x,y\in\mathfrak{g}$.
Then $\tr(f)=\tr(g)$.
\end{lem}
\begin{proof}
Let $\{e_1,\ldots, e_n\}$ be a basis of $\mathfrak{g}$ and let $A=(a_{ij})$ and $B=(b_{ij})$ be the matrix representations of $f$ and $g$, respectively.
Put $\phi_{ij}=\phi(e_i,e_j)$ and $C=(\phi_{ij})$.
Since $\phi$ is non-degenerate, $C$ is invertible.
Then we have
\begin{align*}
\phi(f(e_i), e_j)&=\dsum_{k=1}^n a_{ki} \phi_{kj} \quad \text{and}\\
\phi(e_i,g(e_j))&=\dsum_{k=1}^n b_{kj}\phi_{ik}
\end{align*}
for any $i,j=1,\ldots, n$.
Hence we obtain ${}^tAC=CB$.
Since $C$ is invertible, $\tr(A)=\tr(B)$.
\end{proof}

By the definition of $\ddelta_{\phi}$, we have $\phi(\ell(x)(y),z)=-\phi(y,\ad(x)(z))$.
Hence, by Lemma \ref{lem2},  $\tr(\ell(x))=-\tr(\ad(x))$ holds for each $x\in\mathfrak{g}$.
Since $\ad(x)=\ell(x)-r(x)$, we obtain $\tr(r(x))=2\tr(\ell(x))=-2\tr(\ad(x))$.
Therefore, $\tr(r(x))=0$ if and only if $\tr(\ad(x))=0$.

\end{proof}

\subsection{NDB structures and cosymplectic structures}\label{subsection_cosymplectic}

Let $(\alpha,\omega)$ be a cosymplectic structure on a $(2n+1)$-dimensional Lie algebra $\mathfrak{g}$.
Then  $\phi(x,y)=\alpha(x)\alpha(y)+\omega(x,y)$ is a non-degenerate bilinear form with $\phi_S=\alpha\otimes \alpha$ and $\phi_A=\omega$.
Since $\alpha$ and $\omega$ are closed forms, $\phi_S$ and $\phi_A$ satisfy the equation \eqref{eq2}.

Conversely, we show that an NDB structure $\phi=\phi_S+\phi_A$ on a Lie algebra $\mathfrak{g}$ induces a symplectic structure or cosymplectic structure on $\mathfrak{g}$ under certain conditions.
Consider the radical 
$$
\rad(\phi_S)=\{x\in\mathfrak{g}\mid \phi_S(x,\, \cdot \, )=0\}=\Ker((\phi_S)^{\flat}_L)
$$
of $\phi_S$, which is an ideal of $\mathfrak{g}$.
Then $\phi_S$ induces a non-degenerate $\ad$-invariant symmetric form $\phi'_S$ on the quotient Lie algebra $\mathfrak{g}'=\mathfrak{g}/\rad(\phi_S)$ defined by $\phi'_S(p(x), p(y))=\phi_S(x,y)$, where $p\colon\mathfrak{g}\to\mathfrak{g}'$ is the projection.

\begin{prop}\label{prop1}
Suppose that $\codim \rad(\phi_S)=1$.
\begin{itemize}
\item[(i)] If $\dim\Ker(\phi_A|_{\rad(\phi_S)})=0$, then there exists $d\in\mathfrak{g}$ such that $((\phi_S)^{\flat}_L(d), \phi_A)$ is a cosymplectic structure on $\mathfrak{g}$.
\item[(ii)] If $\dim \Ker(\phi_A|_{\rad(\phi_S)})=1$, then $\phi_A$ is a symplectic structure on $\mathfrak{g}$.
\end{itemize}
\end{prop}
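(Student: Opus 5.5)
The plan is to first exploit the hypothesis $\codim\rad(\phi_S)=1$ to show that $\phi_S$ is essentially the square of a closed $1$-form, and then to reduce both (i) and (ii) to linear algebra on the skew form $\phi_A$.

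First, since $\rad(\phi_S)$ has codimension one, $\phi_S$ has rank one. So there are a nonzero $1$-form $\alpha$ with $\Ker\alpha=\rad(\phi_S)$ and a constant $c\neq 0$ with $\phi_S=c\,\alpha\otimes\alpha$. Fix $d\notin\rad(\phi_S)$ normalized so that $\alpha(d)=1$; then $(\phi_S)^{\flat}_L(d)=c\,\alpha$.

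Next I use the $\ad$-invariance \eqref{eq1}, which follows from \eqref{eq2}. Written out, it says $\alpha([x,y])\alpha(z)=-\alpha(y)\alpha([x,z])$.
\begin{itemize}
\item Putting $y=z=d$ gives $\alpha([x,d])=-\alpha([x,d])$, so $\alpha([x,d])=0$.
\item Putting only $z=d$ then gives $\alpha([x,y])=-\alpha(y)\alpha([x,d])=0$ for all $x,y$.
\end{itemize}
Hence $\alpha$ vanishes on $[\mathfrak{g},\mathfrak{g}]$, i.e.\ $\alpha$ (and so $c\,\alpha=(\phi_S)^{\flat}_L(d)$) is closed. Moreover $\phi_S(x,[y,z])=0$, so the right-hand side of \eqref{eq2} vanishes and $\phi_A$ is a closed $2$-form. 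What remains in both parts is a nondegeneracy statement.

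For (i): $\phi_A|_{\rad(\phi_S)}$ is nondegenerate, so $\dim\rad(\phi_S)=2m$ is even and $\dim\mathfrak{g}=2m+1$. The condition $c\,\alpha\wedge\phi_A^m\neq0$ holds exactly when $\phi_A$ is nondegenerate on $\Ker\alpha=\rad(\phi_S)$, which is the hypothesis. Hence $((\phi_S)^{\flat}_L(d),\phi_A)$ is a cosymplectic structure.

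For (ii): the kernel of $\phi_A|_{\rad(\phi_S)}$ is one-dimensional, so $\rad(\phi_S)$ has odd dimension $n-1$, where $n=\dim\mathfrak{g}$ is even. The rank of $\phi_A$ on $\mathfrak{g}$ is at least the rank $n-2$ of its restriction, and it is even, so it equals $n-2$ or $n$.
\begin{itemize}
\item Suppose $x\neq0$ lies in the kernel of $\phi_A$ on $\mathfrak{g}$. Then $\phi(x,\cdot)=\phi_S(x,\cdot)=c\,\alpha(x)\alpha$.
\item Non-degeneracy of $\phi$ forces $\alpha(x)\neq0$, so the kernel of $\phi_A$ meets $\rad(\phi_S)$ trivially.
\item If the rank were $n-2$, this kernel would be $2$-dimensional and would meet the hyperplane $\rad(\phi_S)$ nontrivially, a contradiction.
\end{itemize}
So $\phi_A$ is nondegenerate and closed, i.e.\ a symplectic structure.

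The main obstacle is the first step, showing that $\alpha$ is closed, because it is what makes $\phi_A$ closed. The key is substituting $d$ into the ad-invariance identity.
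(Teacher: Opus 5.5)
Your proof is correct. The closedness step and part (i) match the paper in substance. The paper gets $[\mathfrak{g},\mathfrak{g}]\subset\rad(\phi_S)$ because $\rad(\phi_S)$ is an ideal of codimension one, while you derive $\alpha([\mathfrak{g},\mathfrak{g}])=0$ directly from the $\ad$-invariance identity by substituting $d$. After that, both proofs read off $d\phi_A=0$ from \eqref{eq2} and use that $\alpha\wedge\phi_A^m\neq 0$ is equivalent to non-degeneracy of $\phi_A$ on $\Ker\alpha$.

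The real difference is in (ii). The paper picks a generator $e$ of $\Ker(\phi_A|_{\rad(\phi_S)})$ and a complement $V$, and writes the Gram matrix of $\phi$ adapted to $\R\langle e\rangle\oplus V\oplus\R\langle d\rangle$. From non-degeneracy of $\phi$ it reads off that $\phi_A(e,d)\neq 0$ and that $\phi_A|_V$ is non-degenerate, hence $\phi_A$ is non-degenerate. You instead observe, without a basis, that $\Ker\phi_A\cap\rad(\phi_S)=\{0\}$ for any non-degenerate $\phi$, so $\dim\Ker\phi_A\le 1$ when $\rad(\phi_S)$ is a hyperplane, and you finish by parity.

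Your route is shorter and shows slightly more: when $\codim\rad(\phi_S)=1$, the form $\phi_A$ is symplectic exactly when $\dim\mathfrak{g}$ is even. So the hypothesis of (ii) is really only used to fix that parity. The paper's computation is more hands-on, but it makes visible that the kernel direction $e$ is paired with $d$ under $\phi_A$. You also state explicitly that $\dim\mathfrak{g}$ is odd in (i); the definition of a cosymplectic structure requires this, and the paper leaves it implicit.
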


\begin{proof}
By the assumption, there exists $d\in\mathfrak{g}$ such that $\mathfrak{g}=\rad(\phi_S)\oplus {\R}\langle d \rangle$ as a vector space.
Since $\rad(\phi_S)$ is an ideal of $\mathfrak{g}$, we obtain $[\mathfrak{g}, \mathfrak{g}]\subset\rad(\phi_S)$ and hence $(\phi_S)^{\flat}_L(d)$ is a closed form.
Then $\phi_A$ is a closed form by the equation \eqref{eq2}.

Suppose that $\dim\Ker(\phi_A|_{\rad(\phi_S)})=0$.
Then $\phi_A$ is non-degenerate on $\rad(\phi_S)$.
Hence $\phi_A|_{\rad(\phi_S)}$ is a symplectic form on $\rad(\phi_S)$.
Since $d\not\in\rad(\phi_S)$ and $\phi_S(\rad(\phi_S), d)=0$, we have $\phi_S(d,d)\not=0$.
Then $(\phi_S)^{\flat}_L(d)$ and $\phi_A$ satisfy $(\phi_S)^{\flat}_L(d)\wedge \phi_A^{m}\not=0$, where $2m$ is the dimension of $\rad(\phi_S)$.
Therefore $((\phi_S)^{\flat}_L(d), \phi_A)$ is a cosymplectic structure on $\mathfrak{g}$.

Suppose that $\dim\Ker(\phi_A|_{\rad(\phi_S)})=1$.
Take a nonzero element $e\in\Ker(\phi_A|_{\rad(\phi_S)})$ and a complementary subspace $V$ of ${\R}\langle e\rangle$ in $\rad(\phi_S)$.
Then $\rad(\phi_S)={\R}\langle e\rangle \oplus V$ and $\mathfrak{g}={\R}\langle e\rangle \oplus V \oplus {\R}\langle d \rangle$ as vector spaces.

Take a basis $\{e_1,\ldots ,e_k\}$ of $V$ and put $e_0=e, e_{k+1}=d$.
Then the matrix representation $(\phi(e_i,e_j))$ of $\phi$ with respect to the basis $\{e_0,\ldots ,e_{k+1}\}$ is of the form
$$
\begin{pmatrix}
0&{}^t \vector{0} & \phi_A(e,d)\\
\vector{0} & (\phi_A(e_i,e_j)) & (\phi_A(e_i,d))\\
\phi_A(d,e) & (\phi_A(d,e_j)) & \phi_S(d,d)
\end{pmatrix}.
$$
Since $\phi$ is non-degenerate, $\phi_A(e,d)\not= 0$ and $\phi_A|_V$ is non-degenerate.
Therefore $\phi_A$ is non-degenerate on $\mathfrak{g}$, that is, $\phi_A$ is a symplectic structure on $\mathfrak{g}$. 
\end{proof}

\section{Construction of NDB structures}\label{section4}
We have seen that symplectic and cosymplectic structures induce NDB structures.
In this section, we give several constructions of NDB structures by using extension procedures.

\subsection{Extension of NDB Lie algebras by symplectic Lie algebras}\label{subsection_extension}
Let $\omega$ be a symplectic structure on a $2r$-dimensional Lie algebra $\mathfrak{r}$ and let
$$
\Der(\mathfrak{r}, \omega)=\{A\in\Der(\mathfrak{r})\mid \omega(A(x), y)+\omega(x,A(y))=0\}
$$
be the set of symplectic derivations of $(\mathfrak{r}, \omega)$.
Let $\mathfrak{h}$ be a Lie algebra with an NDB structure $\phi'=\phi'_S+\phi'_A$, $D\colon\mathfrak{h}\to \Der(\mathfrak{r},\omega)$ a homomorphism, and $\mathfrak{g}=\mathfrak{h}\ltimes_D \mathfrak{r}$ the semidirect sum of $\mathfrak{h}$ and $\mathfrak{r}$ by $D$.
The Lie bracket on $\mathfrak{g}$ is given by
$$
[x+a, y+b]=[x,y]'+[a,b]_{\mathfrak{r}}+D(x)b-D(y)a
$$
for $x,y\in\mathfrak{h}, a,b\in\mathfrak{r}$, where $[\, \cdot \, , \, \cdot \, ]'$ and $[\, \cdot \, , \, \cdot \, ]_{\mathfrak{r}}$ are the Lie brackets on $\mathfrak{h}$ and $\mathfrak{r}$, respectively.  

We define a bilinear form $\phi$ on $\mathfrak{g}$ by 
\begin{equation}
\phi(x+a,y+b)=\phi'(x,y)+\omega(a,b). \label{eq4-1}
\end{equation}

\begin{prop}\label{prop2}
The bilinear form $\phi$ is an NDB structure on $\mathfrak{g}$.
\end{prop}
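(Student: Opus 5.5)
Two things have to be checked: that $\phi$ is non-degenerate, and that it satisfies \eqref{eq2}. By Lemma \ref{lem1} and the proof of Theorem \ref{thm1}, \eqref{eq2} is equivalent to the single identity \eqref{eq3}, namely
$$\phi([X,Y],Z)+\phi(Y,[X,Z])=\phi(X,[Y,Z]).$$
We will verify \eqref{eq3} for $X=x+a$, $Y=y+b$, $Z=z+c$ with $x,y,z\in\mathfrak{h}$ and $a,b,c\in\mathfrak{r}$.

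\emph{Non-degeneracy.} With respect to the decomposition $\mathfrak{g}=\mathfrak{h}\oplus\mathfrak{r}$, the map $\phi^\flat_L$ is block diagonal, with blocks $(\phi')^\flat_L$ and $\omega^\flat$. Both blocks are isomorphisms, so $\phi^\flat_L$ is one as well.

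\emph{The identity \eqref{eq3}.} Since $\phi$ is trilinear-compatible in each slot, it suffices to check \eqref{eq3} when each of $X,Y,Z$ lies in $\mathfrak{h}$ or in $\mathfrak{r}$. That gives eight cases. The brackets are $[\mathfrak{h},\mathfrak{h}]\subset\mathfrak{h}$, $[\mathfrak{h},\mathfrak{r}]\subset\mathfrak{r}$ (given by $D$), and $[\mathfrak{r},\mathfrak{r}]\subset\mathfrak{r}$. Because $\phi$ pairs $\mathfrak{h}$ only with $\mathfrak{h}$ and $\mathfrak{r}$ only with $\mathfrak{r}$, a term vanishes unless both arguments lie in the same summand.

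\begin{itemize}
\item[(1)] All three in $\mathfrak{h}$: \eqref{eq3} for $\phi'$ holds by Theorem \ref{thm1}, since $\phi'$ is NDB.
\item[(2)] Exactly two in $\mathfrak{h}$: every term pairs an element of $\mathfrak{h}$ with an element of $\mathfrak{r}$, or vice versa. Hence every term is $0$.
\item[(3)] All three in $\mathfrak{r}$: \eqref{eq3} for $\omega$ reads $\omega([a,b],c)+\omega(b,[a,c])=\omega(a,[b,c])$. Using skew-symmetry, this is exactly the closedness condition \eqref{def_symplectic}, i.e.\ \eqref{eq2} with $\phi_S=0$.
\item[(4)] Exactly one in $\mathfrak{h}$: there are three subcases.
  \begin{itemize}
  \item[(4a)] $X=x$: the identity becomes $\omega(D(x)b,c)+\omega(b,D(x)c)=\omega(0\text{-part})$. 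The right-hand side $\phi(x,[b,c])$ vanishes because $[b,c]\in\mathfrak{r}$. The left-hand side vanishes because $D(x)\in\Der(\mathfrak{r},\omega)$.
  \item[(4b)] $Y=y$: we have $\phi([a,y],c)=-\omega(D(y)a,c)$, $\phi(y,[a,c])=0$, and $\phi(a,[y,c])=\omega(a,D(y)c)$. The identity $-\omega(D(y)a,c)=\omega(a,D(y)c)$ is again symplecticity of $D(y)$.
  \item[(4c)] $Z=z$: we have $\phi([a,b],z)=0$, $\phi(b,[a,z])=-\omega(b,D(z)a)$, and $\phi(a,[b,z])=-\omega(a,D(z)b)$. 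The required identity $\omega(b,D(z)a)=\omega(a,D(z)b)$ follows from $\omega(D(z)a,b)=-\omega(a,D(z)b)$ together with skew-symmetry of $\omega$.
  \end{itemize}
\end{itemize}

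Note that the derivation property of $D(x)$ and the homomorphism property of $D$ are not used here. They serve only to make $\mathfrak{g}$ a Lie algebra.

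The only point needing care is the sign bookkeeping in case (4), where one must use the bracket convention $[x,b]=D(x)b$ and $[a,y]=-D(y)a$ consistently. Everything else is immediate from the block structure of $\phi$. Combining the cases with Lemma \ref{lem1} and Theorem \ref{thm1} shows that $\phi$ is an NDB structure.
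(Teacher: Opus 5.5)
Your proof is correct and is essentially the paper's argument. Both get non-degeneracy from the block-diagonal form of $\phi$, and both reduce the compatibility identity to three facts: the NDB property of $\phi'$, the closedness of $\omega$, and the $\omega$-skewness of each $D(x)$. The only difference is cosmetic: you check the equivalent identity \eqref{eq3} (legitimately, via Lemma \ref{lem1} and Theorem \ref{thm1}) case by case, whereas the paper checks \eqref{eq2} directly for general $\xi_i=x+a$, using $\phi_S=\phi'_S\oplus 0$ and $\phi_A=\phi'_A\oplus\omega$.
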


\begin{proof}
Since $\phi'$ and $\omega$ are non-degenerate on $\mathfrak{h}$ and $\mathfrak{r}$, respectively, $\phi$ is non-degenerate.

For $\xi_1=x+a, \xi_2=y+b, \xi_3=z+c\in\mathfrak{g}$, we have 
\begin{align*}
\phi_S(\xi_1,[\xi_2,\xi_3])
&=\phi_S(x+a, [y,z]'+[b,c]_{\mathfrak{r}}+D(y)c-D(z)b)\\
&=\phi'_S(x,[y,z]').
\end{align*}
Since  $\phi_A(\mathfrak{h},\mathfrak{r})=0$ and $d\omega=0$, we obtain
\begin{align*}
&\phi_A([\xi_1,\xi_2],\xi_3)+\phi_A([\xi_2,\xi_3],\xi_1)+\phi_A([\xi_3,\xi_1],\xi_2)\\
&=\phi'_A([x,y]',z)+\phi'_A([y,z]',x)+\phi'_A([z,x]',y)\\ &\quad +\omega([a,b]_{\mathfrak{r}}+D(x)b-D(y)a,c)+\omega([b,c]_{\mathfrak{r}}+D(y)c-D(z)b, a) \\ &\quad +\omega([c,a]_{\mathfrak{r}}+D(z)a-D(x)c,b)\\
&=\phi'_A([x,y]',z)+\phi'_A([y,z]',x)+\phi'_A([z,x]',y)\\
&=\phi'_S(x,[y,z]').
\end{align*}
Therefore the equation \eqref{eq2} holds.
\end{proof}

Conversely, an NDB structure satisfying certain conditions can be obtained from this construction.

\begin{thm}\label{thm3}
Let $\phi=\phi_S+\phi_A$ be an NDB structure on a Lie algebra $\mathfrak{g}$.
Suppose that there exists a subalgebra $\mathfrak{h}$ of  $\mathfrak{g}$ such that $\mathfrak{h}$ and $\rad(\phi_S)$ are orthogonal with respect to $\phi_A$ and $\mathfrak{g}=\mathfrak{h}\oplus\rad(\phi_S)$ as a vector space.

Then the following holds:
\begin{itemize}
\item[(i)] $\phi|_{\mathfrak{h}}=\phi_A|_{\mathfrak{h}}+\phi_S|_{\mathfrak{h}}$ is an NDB structure on $\mathfrak{h}$.
\item[(ii)] $\phi_A|_{\rad(\phi_S)}$ is a symplectic structure on $\rad(\phi_S)$.
\item[(iii)] There exists a homomorphism $D\colon\mathfrak{h}\to\Der(\rad(\phi_S),\phi_A|_{\rad(\phi_S)})$ such that $\mathfrak{g}$ is isomorphic to $\mathfrak{h}\ltimes_D\rad(\phi_S)$.
\item[(iv)] Under the identification of $\mathfrak{g}$ with $\mathfrak{h}\ltimes_D \rad(\phi_S)$,  $\phi$ coincides with the NDB structure defined by \eqref{eq4-1}.
\end{itemize}
\end{thm}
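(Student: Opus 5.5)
Write $\mathfrak{r}=\rad(\phi_S)$. The plan is to use two facts: $\mathfrak{r}$ is an ideal, since $\phi_S$ is $\ad$-invariant by \eqref{eq1}; and $\phi_S$ vanishes on $\mathfrak{r}\times\mathfrak{g}$. We then extract everything from non-degeneracy of $\phi$ and from \eqref{eq2}.

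Step (ii). Because $\phi_S|_{\mathfrak{r}\times\mathfrak{g}}=0$, for $a\in\mathfrak{r}$ we have $\phi(a,\,\cdot\,)=\phi_A(a,\,\cdot\,)$. The hypothesis $\phi_A(\mathfrak{h},\mathfrak{r})=0$ gives $\phi(a,\mathfrak{h})=0$. Hence if $\phi_A(a,\mathfrak{r})=0$, then $\phi(a,\mathfrak{g})=0$, and so $a=0$. Thus $\phi_A|_{\mathfrak{r}}$ is non-degenerate. For closedness, apply \eqref{eq2} with $x,y,z\in\mathfrak{r}$. The right side $\phi_S(x,[y,z])$ vanishes because $x\in\rad(\phi_S)$. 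Since $\mathfrak{r}$ is an ideal, the brackets stay in $\mathfrak{r}$, so the left side is $-d(\phi_A|_{\mathfrak{r}})$. So $\phi_A|_{\mathfrak{r}}$ is symplectic.

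Step (i). Non-degeneracy of $\phi|_{\mathfrak{h}}$ follows the same way. If $x\in\mathfrak{h}$ and $\phi(x,\mathfrak{h})=0$, then $\phi(x,\mathfrak{r})=\phi_S(x,\mathfrak{r})+\phi_A(x,\mathfrak{r})=0$, hence $x=0$. Since $\mathfrak{h}$ is a subalgebra, \eqref{eq2} restricted to $x,y,z\in\mathfrak{h}$ is exactly \eqref{eq2} for $\phi|_{\mathfrak{h}}$. Here the symmetric and skew parts of $\phi|_{\mathfrak{h}}$ are the restrictions of $\phi_S$ and $\phi_A$.

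Step (iii). Define $D(x)=\ad(x)|_{\mathfrak{r}}$ for $x\in\mathfrak{h}$. This is a derivation of $\mathfrak{r}$ because $\mathfrak{r}$ is an ideal, and $D$ is a homomorphism by Jacobi. The map $\mathfrak{h}\ltimes_D\mathfrak{r}\to\mathfrak{g}$, $x+a\mapsto x+a$, is then a Lie isomorphism, since $[x,y]\in\mathfrak{h}$, $[a,b]\in\mathfrak{r}$ and $[x,a]=D(x)a$. It remains to show $D(x)\in\Der(\mathfrak{r},\phi_A|_{\mathfrak{r}})$. I expect this to be the main step. Apply \eqref{eq2} to $x\in\mathfrak{h}$ and $a,b\in\mathfrak{r}$, taking $x$ as the third slot, i.e. evaluate $\phi_A([a,b],x)+\phi_A([b,x],a)+\phi_A([x,a],b)=\phi_S(a,[b,x])=0$. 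The first term vanishes because $[a,b]\in\mathfrak{r}$ is $\phi_A$-orthogonal to $x\in\mathfrak{h}$. What remains is $-\phi_A(D(x)b,a)+\phi_A(D(x)a,b)=0$, i.e. $\phi_A(D(x)a,b)+\phi_A(a,D(x)b)=0$, which is the required skew-symmetry.

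Step (iv). For $x,y\in\mathfrak{h}$ and $a,b\in\mathfrak{r}$, expand $\phi(x+a,y+b)$. The cross terms are $\phi(x,b)=\phi_S(x,b)+\phi_A(x,b)=0$, using $b\in\rad(\phi_S)$ and the orthogonality hypothesis, and similarly $\phi(a,y)=0$. Also $\phi(a,b)=\phi_A(a,b)$. So $\phi(x+a,y+b)=\phi|_{\mathfrak{h}}(x,y)+\phi_A|_{\mathfrak{r}}(a,b)$, which is exactly the form \eqref{eq4-1}.
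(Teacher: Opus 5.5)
Your proposal is correct and follows essentially the same route as the paper. The paper writes $\phi_S$ and $\phi_A$ in block form with respect to $\mathfrak{g}=\mathfrak{h}\oplus\rad(\phi_S)$ and notes that $\phi$ is block diagonal, which gives non-degeneracy of both blocks and (iv); it then expands \eqref{eq2} to obtain the NDB condition on $\mathfrak{h}$, closedness of $\phi_A$ on $\rad(\phi_S)$, and the identity $\phi_A([x,b],c)+\phi_A([c,x],b)=0$, which is exactly the triple-by-triple derivation you carry out element-wise.
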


\begin{proof}
We write $\phi_A$ and $\phi_S$ as 
$$
\phi_A=\begin{pmatrix}\phi_A^{1,1} &\phi_A^{1,2}\\ \phi_A^{2,1} &\phi_A^{2,2}\end{pmatrix}
\quad \text{and} \quad 
\phi_S=\begin{pmatrix}\phi_S^{1,1} &\phi_S^{1,2}\\ \phi_S^{2,1} &\phi_S^{2,2}\end{pmatrix}
$$
with respect to the decomposition $\mathfrak{g}=\mathfrak{h}\oplus \rad(\phi_S)$.
By the definition of $\rad(\phi_S)$, $\phi_S^{1,2}=0, \phi_S^{2,1}=0$, and $\phi_S^{2,2}=0$.
Since $\mathfrak{h}$ and $\rad(\phi_S)$ are $\phi_A$-orthogonal, $\phi_A^{2,1}=0$ and $\phi_A^{1,2}=0$.
Hence we have
\begin{equation}
\phi=\phi_S+\phi_A=\begin{pmatrix}\phi_S^{1,1}+\phi_A^{1,1} & 0 \\ 0 & \phi_A^{2,2}\end{pmatrix}. \label{4-0}
\end{equation}

For any $\xi_1=x+a, \xi_2=y+b,\xi_3=z+c\in\mathfrak{g}=\mathfrak{h}\oplus\rad(\phi_S)$, since $\rad(\phi_S)$ is an ideal of $\mathfrak{g}$, we have
\begin{align*}
&\phi_S(\xi_1,[\xi_2,\xi_3])=\phi_S^{1,1}(x,[y,z]) \quad \text{and}\\
&\phi_A([\xi_1,\xi_2],\xi_3)+\phi_A([\xi_2,\xi_3],\xi_1)+\phi_A([\xi_3,\xi_1],\xi_2)\\
&=\phi_A^{1,1}([x,y],z)+\phi_A^{1,1}([y,z],x)+\phi_A^{1,1}([z,x],y)\\
&\quad +\phi_A^{2,2}([x,b]+[a,y]+[a,b],c)+\phi_A^{2,2}([y,c]+[b,z]+[b,c],a)\\ 
&\quad +\phi_A^{2,2}([z,a]+[c,x]+[c,a],b).
\end{align*}
By the equation \eqref{eq2}, these equations yield the following equations:
\begin{align}
&\phi_S^{1,1}(x,[y,z])=\phi_A^{1,1}([x,y],z)+\phi_A^{1,1}([y,z],x)+\phi_A^{1,1}([z,x],y). \label{4-1}\\
&d\phi_A^{2,2}=0. \label{4-2} \\
&\phi_A^{2,2}([x,b],c)+\phi_A^{2,2}([c,x],b)=0. \label{4-3}
\end{align}

Since $\phi$ is non-degenerate, $\phi_S^{1,1}+\phi_A^{1,1}$ and $\phi_A^{2,2}$ are non-degenerate.
Hence, by \eqref{4-1} and \eqref{4-2}, $\phi|_{\mathfrak{h}}$ is an NDB structure on $\mathfrak{h}$ and $\phi_A^{2,2}$ is a symplectic structure on $\rad(\phi_S)$.

The adjoint representation induces a homomorphism $D\colon \mathfrak{h}\to \Der(\rad(\phi_S))$ and $\mathfrak{g}$ is isomorphic to $\mathfrak{h}\ltimes_D \rad(\phi_S)$.
By \eqref{4-3}, $D(x)$ satisfies $\phi_A^{2,2}(D(x)b, c)+\phi_A^{2,2}(b,D(x)c)=0$.
Hence $D(x)\in \Der(\rad(\phi_S), \phi_A^{2,2}|_{\rad(\phi_S)})$.

(iv) follows from \eqref{4-0}.
\end{proof}

Since $\tr(D(x))=0$ for any $x\in\mathfrak{h}$, the left-symmetric structure associated with $\phi=\phi'+\omega$ is complete if and only if both $\mathfrak{h}$ and $\rad(\phi_S)$ are unimodular  by Theorem \ref{complete}.

\subsection{Double extension for  NDB structures}\label{subsection_double_extension}

For quadratic Lie algebras, the notion of double extension was introduced in \cite{MedRev85}.
For symplectic Lie algebras, the notion of double extension was introduced in \cite{MedRev91} and generalized in \cite{DarMed96}.
The notion of double extension was also defined for cosymplectic Lie algebras (see \cite{BouMan24Lef}).
In this subsection, we define the notion of double extension for NDB structures.
This notion can be regarded as an analogue of the notions of double extension for quadratic and cosymplectic Lie algebras.

Let $\mathfrak{g}$ be a Lie algebra with an NDB structure $\phi=\phi_S+\phi_A$.
Take $\theta\in\bigwedge^2 \mathfrak{g}^*$ with $d\theta=0$ and take $D\in\Der(\mathfrak{g})$ and $\lambda \in \mathfrak{g}^*$.
Suppose that $\theta, D$, and $\lambda$ satisfy
\begin{align}
&\lambda ([x,y])=\theta(D(x),y)+\theta(x,D(y)) \quad  \text{and}\label{4-4}\\
&\phi_S(D(x), y)+\phi_S(x,D(y))=0 \label{4-5}
\end{align}
for any $x,y\in\mathfrak{g}$.
Define a bracket $[\, \cdot \, , \, \cdot \, ]^{\tilde{}}$ on $\tilde{\mathfrak{g}}={\R}\langle e \rangle \oplus \mathfrak{g} \oplus {\R} \langle d\rangle $ by
\begin{align*}
[x,y]^{\tilde{}}&=[x,y] +\theta(x,y) e, \\
[d,x]^{\tilde{}}&=D(x)+\lambda (x) e, \quad \text{and} \\
[e,\tilde{\mathfrak{g}}]^{\tilde{}}&=0
\end{align*}
for $x,y\in\ \mathfrak{g}$.
Since $d\theta=0$, $D\in\Der(\mathfrak{g})$, and the equation \eqref{4-4} holds, $[\, \cdot \, , \, \cdot \, ]^{\tilde{}}$ is a Lie bracket on $\tilde{\mathfrak{g}}$.

We denote by $\mathfrak{g}_{\theta}$ the Lie algebra obtained as  the central extension of $\mathfrak{g}$ by ${\R}\langle e\rangle $ determined by $\theta$.
Then the Lie algebra $\tilde{\mathfrak{g}}$ is the semidirect sum ${\R}\langle d\rangle \ltimes_{\tilde{D}}\mathfrak{g}_{\theta}$, where  $\tilde{D}\in\Der(\mathfrak{g}_{\theta})$ is given by $\tilde{D}(x)=D(x)+\lambda(x) e$ for $x\in\mathfrak{g}$  and $\tilde{D}(e)=0$.

Take $k,l\in{\R}$ with $k\not= \pm l$ and $\alpha\in \mathfrak{g}^*$.
Suppose that $k,l$, and $\alpha$ satisfy the following equations:
\begin{align}
&\alpha([x,y])+\theta(x,y) k-\phi_S(y,D(x))=0. \label{4-6}\\ 
&\alpha(D(x))+k\lambda(x)=0. \label{4-7} \\
&\phi_A(D(x),y)+\phi_A(x,D(y))+\alpha([x,y])+l\theta(x,y)+\phi_S(x,D(y))=0. \label{4-8}
\end{align}

\begin{prop}
Take $k'\in{\R}$ and define a bilinear form $\tilde{\phi}=\tilde{\phi}_S+\tilde{\phi}_A$ on $\tilde{\mathfrak{g}}$ by
\begin{equation}
\tilde{\phi}_S=\begin{pmatrix} 0& 0& k\\ 0&\phi_S &\alpha \\ k&\alpha &k' \end{pmatrix} \quad \text{and}\quad  \tilde{\phi}_A=\begin{pmatrix}0&0&l\\0&\phi_A &\alpha \\ -l &-\alpha &0 \end{pmatrix} \label{df_double}
\end{equation}
with respect to the vector space decomposition $\tilde{\mathfrak{g}}={\R}\langle e\rangle \oplus \mathfrak{g} \oplus {\R} \langle d\rangle$.
Then $\tilde{\phi}$ is an NDB structure on $\tilde{\mathfrak{g}}$.
\end{prop}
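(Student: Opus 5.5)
The plan is to verify the two defining properties of an NDB structure directly: non-degeneracy of $\tilde{\phi}$, and equation \eqref{eq2} for $\tilde{\phi}_S,\tilde{\phi}_A$ on $\tilde{\mathfrak{g}}$. By trilinearity, \eqref{eq2} only needs to be checked when each argument is $e$, $d$, or an element of $\mathfrak{g}$.

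\emph{Non-degeneracy.} With respect to $\tilde{\mathfrak{g}}={\R}\langle e\rangle\oplus\mathfrak{g}\oplus{\R}\langle d\rangle$, the matrix of $\tilde{\phi}$ has the following shape. The row of $e$ is $(0,0,k+l)$. The column of $e$ is $(0,0,k-l)^{t}$. The middle block is $\phi$. Expanding the determinant along the row of $e$ and then along the column of $e$ gives $\det\tilde{\phi}=\pm(k+l)(k-l)\det\phi$. This is nonzero because $k\neq\pm l$ and $\phi$ is non-degenerate.

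\emph{Equation \eqref{eq2}.} The left-hand side $L(\xi_1,\xi_2,\xi_3)$ is always alternating. The right-hand side $R(\xi_1,\xi_2,\xi_3)=\tilde{\phi}_S(\xi_1,[\xi_2,\xi_3]^{\tilde{}})$ is not alternating a priori, since ad-invariance of $\tilde{\phi}_S$ is not yet known. So in each case I will check $R$ for every relevant ordering of the arguments. The basic observation is that every bracket lies in $\mathfrak{g}\oplus{\R}\langle e\rangle$, has no $d$-component, and $e$ is central.

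\emph{Cases involving $e$, or only $\mathfrak{g}$.}
\begin{itemize}
\item[(a)] Some argument equals $e$. Every term of $L$ is then either of the form $\tilde{\phi}_A([\cdot,\cdot]^{\tilde{}},e)$, which vanishes because $\tilde{\phi}_A(\mathfrak{g}\oplus{\R}e,e)=0$, or contains $[e,\cdot]^{\tilde{}}=0$. Likewise $R$ vanishes, because $\tilde{\phi}_S(e,\mathfrak{g}\oplus{\R}e)=0$.
\item[(b)] All arguments $x,y,z$ lie in $\mathfrak{g}$. The $\theta e$-parts of the brackets pair to zero with $\mathfrak{g}$ under both $\tilde{\phi}_A$ and $\tilde{\phi}_S$. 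Hence \eqref{eq2} reduces to \eqref{eq2} for $\phi$ on $\mathfrak{g}$.
\end{itemize}

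\emph{Two arguments $x,y\in\mathfrak{g}$ and one equal to $d$.} First compute
\[
L(d,x,y)=\phi_A(D(x),y)+\phi_A(x,D(y))+\alpha([x,y])+l\theta(x,y).
\]
Here the terms $\alpha$ and $l\theta$ come from $\tilde{\phi}_A([x,y]^{\tilde{}},d)$. By \eqref{4-8}, this equals $-\phi_S(x,D(y))$. For the right-hand side, the three orderings give
\[
R(d,x,y)=\alpha([x,y])+k\theta(x,y),\qquad R(x,d,y)=\phi_S(x,D(y)),\qquad R(x,y,d)=-\phi_S(x,D(y)).
\]
Matching these against $L(d,x,y)=-L(x,d,y)=L(x,y,d)$ requires three things:
\begin{itemize}
\item $\alpha([x,y])+k\theta(x,y)=-\phi_S(x,D(y))$, which should follow from \eqref{4-6} combined with \eqref{4-5};
\item $R(x,d,y)=-L(d,x,y)$, which holds immediately from the two displays;
\item $R(x,y,d)=L(d,x,y)$, which likewise holds immediately.
\end{itemize}
I expect this case to be the main obstacle. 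The difficulty is the sign and index bookkeeping: reconciling \eqref{4-6}, written with $\phi_S(y,D(x))$, against $-\phi_S(x,D(y))$. This is exactly where \eqref{4-5} has to enter, and I will carefully track the antisymmetry in $x,y$ there.

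\emph{One argument $x\in\mathfrak{g}$ and two equal to $d$.} On the left, $[d,d]^{\tilde{}}=0$, and the two remaining terms $\tilde{\phi}_A([d,x]^{\tilde{}},d)$ and $\tilde{\phi}_A([x,d]^{\tilde{}},d)$ cancel, so $L=0$. On the right, $R(x,d,d)=0$, while
\[
R(d,d,x)=-R(d,x,d)=\tilde{\phi}_S\bigl(d,D(x)+\lambda(x)e\bigr)=\alpha(D(x))+k\lambda(x),
\]
which vanishes by \eqref{4-7}. The case with three arguments equal to $d$ is trivial. Collecting all the cases shows that $\tilde{\phi}$ is an NDB structure.
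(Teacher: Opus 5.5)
Your proposal is correct and follows essentially the same route as the paper: the same case split over $e$, $\mathfrak{g}$, $d$, checking the right-hand side of \eqref{eq2} in every ordering, with \eqref{4-5}, \eqref{4-6}, \eqref{4-8} used for the $(x,y,d)$ case and \eqref{4-7} for the $(x,d,d)$ case. Your determinant expansion simply makes explicit the paper's one-line non-degeneracy claim. The step you flagged as the main obstacle closes in one line: by \eqref{4-6}, the symmetry of $\phi_S$, and \eqref{4-5}, $\alpha([x,y])+k\theta(x,y)=\phi_S(y,D(x))=\phi_S(D(x),y)=-\phi_S(x,D(y))$.
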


\begin{proof}
Since $k\not= \pm l$, $\tilde{\phi}$ is non-degenerate.

For $x,y,z\in\mathfrak{g}$, $\tilde{\phi}_A$ and $\tilde{\phi}_S$ satisfy
\begin{align*}
&\tilde{\phi}_A([x,y]^{\tilde{}},z)+\tilde{\phi}_A([y,z]^{\tilde{}},x)+\tilde{\phi}_A([z,x]^{\tilde{}},y)\\
&=\tilde{\phi}_A([x,y]+\theta(x,y)e,z)+\tilde{\phi}_A([y,z]+\theta(y,z)e,x)+\tilde{\phi}_A([z,x]+\theta(z,x)e,y) \notag\\
&=\phi_A([x,y], z)+\phi_A([y,z], x)+\phi_A([z,x], y) \quad \text{and}\notag\\
&\tilde{\phi}_S(x,[y,z]^{\tilde{}})=\tilde{\phi}_S(x,[y,z]+\theta(y,z)e)=\phi_S(x,[y,z]).
\end{align*}
Since $\phi_S$ and $\phi_A$ satisfy \eqref{eq2}, we have
$$
\tilde{\phi}_A([x,y]^{\tilde{}},z)+\tilde{\phi}_A([y,z]^{\tilde{}},x)+\tilde{\phi}_A([z,x]^{\tilde{}},y)=\tilde{\phi}_S(x,[y,z]^{\tilde{}})
$$
for $x,y,z\in\mathfrak{g}$.

For $x,y\in\mathfrak{g}$, the following equations hold:
\begin{align*}
&\tilde{\phi}_A([x,y]^{\tilde{}}, e)+\tilde{\phi}_A([y,e]^{\tilde{}},x)+\tilde{\phi}_A([e,x]^{\tilde{}},y)=0.\\
&\tilde{\phi}_S(x,[y,e]^{\tilde{}})=\tilde{\phi}_S(y,[e,x]^{\tilde{}})=\tilde{\phi}_S(e,[x,y]^{\tilde{}})=0.\\
&\tilde{\phi}_A([x,e]^{\tilde{}},e)+\tilde{\phi}_A([e,e]^{\tilde{}},x)+\tilde{\phi}_A([e,x]^{\tilde{}},e)=0.\\
&\tilde{\phi}_S(x,[e,e]^{\tilde{}})=\tilde{\phi}_S(e,[e,x]^{\tilde{}})=\tilde{\phi}_S(e,[x,e]^{\tilde{}})=0.
\end{align*}
These equations yield the following equations:
\begin{align*}
&\tilde{\phi}_A([x,y]^{\tilde{}}, e)+\tilde{\phi}_A([y,e]^{\tilde{}},x)+\tilde{\phi}_A([e,x]^{\tilde{}},y)=\tilde{\phi}_S(x,[y,e]^{\tilde{}})=\tilde{\phi}_S(y,[e,x]^{\tilde{}})=\tilde{\phi}_S(e,[x,y]^{\tilde{}}).\\
&\tilde{\phi}_A([x,e]^{\tilde{}},e)+\tilde{\phi}_A([e,e]^{\tilde{}},x)+\tilde{\phi}_A([e,x]^{\tilde{}},e)=\tilde{\phi}_S(x,[e,e]^{\tilde{}})=\tilde{\phi}_S(e,[e,x]^{\tilde{}})=\tilde{\phi}_S(e,[x,e]^{\tilde{}}).
\end{align*}

By using the equations \eqref{4-5}, \eqref{4-6}, and \eqref{4-8} we obtain the following equations: 
\begin{align*}
&\tilde{\phi}_A([x,y]^{\tilde{}},d)+\tilde{\phi}_A([y,d]^{\tilde{}},x)+\tilde{\phi}_A([d,x]^{\tilde{}},y)\\
&=\tilde{\phi}_A([x,y]+\theta(x,y)e,d)+\tilde{\phi}_A(-D(y)-\lambda(y)e,x)+\tilde{\phi}_A(D(x)+\lambda(x)e, y)\\
&=\alpha([x,y])+\theta(x,y)l-\phi_A(D(y),x)+\phi_A(D(x),y) \\
&=-\phi_S(x,D(y)).\\
&\tilde{\phi}_S(x,[y,d]^{\tilde{}})
=\tilde{\phi}_S(x,-D(y)-\lambda(y)e)
=-\phi_S(x,D(y)).\\
&\tilde{\phi}_S(y,[d,x]^{\tilde{}})=\phi_S(y,D(x)).\\
&\tilde{\phi}_S(d,[x,y]^{\tilde{}})
=\tilde{\phi}_S(d,[x,y]+\theta(x,y)e)
=\alpha([x,y])+k\theta(x,y)
=\phi_S(y,D(x)).
\end{align*}
These equations imply 
\begin{align*}
&\tilde{\phi}_A([x,y]^{\tilde{}},d)+\tilde{\phi}_A([y,d]^{\tilde{}},x)+\tilde{\phi}_A([d,x]^{\tilde{}},y)\\
&=\tilde{\phi}_S(x,[y,d]^{\tilde{}})=\tilde{\phi}_S(y,[d,x]^{\tilde{}})=\tilde{\phi}_S(d,[x,y]^{\tilde{}}).
\end{align*}

Equations
\begin{align*}
&\tilde{\phi}_A([x,d]^{\tilde{}},d)+\tilde{\phi}_A([d,d]^{\tilde{}},x)+\tilde{\phi}_A([d,x]^{\tilde{}},d)=0, \\
&\tilde{\phi}_S(x,[d,d]^{\tilde{}})=0,\\
&\tilde{\phi}_S(d,[d,x]^{\tilde{}})=\tilde{\phi}_S(d, D(x)+\lambda(x)e)=\alpha(D(x))+k\lambda(x), \quad \text{and}\\
&\tilde{\phi}_S(d,[x,d]^{\tilde{}})=-\alpha(D(x))-k\lambda(x),
\end{align*}
and the equation \eqref{4-7} imply
\begin{align*}
&\tilde{\phi}_A([x,d]^{\tilde{}},d)+\tilde{\phi}_A([d,d]^{\tilde{}},x)+\tilde{\phi}_A([d,x]^{\tilde{}},d)\\
&=\tilde{\phi}_S(x,[d,d]^{\tilde{}})=\tilde{\phi}_S(d,[d,x]^{\tilde{}})=\tilde{\phi}_S(d,[x,d]^{\tilde{}}).
\end{align*}

Equations
\begin{align*}
&\tilde{\phi}_A([x,e]^{\tilde{}},d)+\tilde{\phi}_A([e,d]^{\tilde{}},x)+\tilde{\phi}_A([d,x]^{\tilde{}},e)=0 \quad\text{and}\\
&\tilde{\phi}_S(x,[e,d]^{\tilde{}})=\tilde{\phi}_S(e,[d,x]^{\tilde{}})=\tilde{\phi}_S(d,[x,e]^{\tilde{}})=0
\end{align*}
imply
\begin{align*}
&\tilde{\phi}_A([x,e]^{\tilde{}},d)+\tilde{\phi}_A([e,d]^{\tilde{}},x)+\tilde{\phi}_A([d,x]^{\tilde{}},e)\\
&=\tilde{\phi}_S(x,[e,d]^{\tilde{}})=\tilde{\phi}_S(e,[d,x]^{\tilde{}})=\tilde{\phi}_S(d,[x,e]^{\tilde{}}).
\end{align*}

Therefore, $\tilde{\phi}_S$ and $\tilde{\phi}_A$ satisfy the equation \eqref{eq2}.
\end{proof}

\begin{df}\label{def_double_extension}
We call the NDB Lie algebra $(\tilde{\mathfrak{g}}, \tilde{\phi})$  defined by \eqref{df_double} a double extension of $(\mathfrak{g}, \phi)$ by $(\theta, D, \lambda, \alpha, k ,k',l)$.
\end{df}

Conversely, an NDB Lie algebra $(\tilde{\mathfrak{g}}, \tilde{\phi})$ satisfying a certain condition is obtained by double extension.

\begin{thm}\label{thm_double_extension}
Let $\tilde{\phi}=\tilde{\phi}_S+\tilde{\phi}_A$ be an NDB structure on an $n$-dimensional Lie algebra $\tilde{\mathfrak{g}}$.
Suppose that there exists a nonzero element $e\in c(\tilde{\mathfrak{g}})$ such that $\tilde{\phi}(e,e)=0$ and 
$$
I^{\perp}=\{ x\in\tilde{\mathfrak{g}}\mid \tilde{\phi}(e,x)=\tilde{\phi}(x,e)=0\}
$$
is a codimension-one subspace of $\tilde{\mathfrak{g}}$, where $c(\tilde{\mathfrak{g}})$ is the center of $\tilde{\mathfrak{g}}$ and $I={\R}\langle e\rangle$.
Then there exists an NDB Lie algebra $(\mathfrak{g}, \phi)$ such that $(\tilde{\mathfrak{g}}, \tilde{\phi})$ is obtained by double extension of $(\mathfrak{g}, \phi)$ by $(\theta, D, \lambda, \alpha, k ,k',l)$.
\end{thm}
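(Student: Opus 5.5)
Since $e$ is central, $I=\R\langle e\rangle$ is an ideal. The plan is to show that $I^{\perp}$ is an ideal, set $\mathfrak{g}=I^{\perp}/I$, and reverse-engineer the data $(\theta,D,\lambda,\alpha,k,k',l)$ from the matrix form of $\tilde{\phi}$. First, $e\in I^{\perp}$ because $\tilde{\phi}(e,e)=0$. Next I check that $I^{\perp}$ is an ideal, using the identity \eqref{eq3} for $\tilde\phi$ (Lemma~\ref{lem1}) with $e$ central. Putting $y=e$ in \eqref{eq3} gives
\[
\tilde\phi([x,e],z)+\tilde\phi(e,[x,z])=\tilde\phi(x,[e,z]),
\]
hence $\tilde\phi(e,[x,z])=0$ for all $x,z$. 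Putting $z=e$ in $\tilde\phi([x,y],z)+\tilde\phi(y,[x,z])=\tilde\phi(x,[y,z])$ gives $\tilde\phi([x,y],e)=0$. So $[\tilde{\mathfrak{g}},\tilde{\mathfrak{g}}]\subset I^{\perp}$, and $I^{\perp}$ is an ideal containing the derived algebra.

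Now choose $d\notin I^{\perp}$. Because $\tilde\phi(e,\cdot)$ and $\tilde\phi(\cdot,e)$ both vanish on the codimension-one subspace $I^{\perp}$ and are nonzero by non-degeneracy, we have $\tilde\phi(e,d)\neq0$ and $\tilde\phi(d,e)\neq 0$. Set $k=\tfrac12(\tilde\phi(e,d)+\tilde\phi(d,e))$ and $l=\tfrac12(\tilde\phi(e,d)-\tilde\phi(d,e))$; then $k\neq\pm l$. Choose a complement $V$ of $I$ in $I^{\perp}$, giving $\tilde{\mathfrak g}=\R\langle e\rangle\oplus V\oplus\R\langle d\rangle$. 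Identify $V$ with $\mathfrak g=I^{\perp}/I$, with bracket $[x,y]=\mathrm{pr}_V[x,y]^{\tilde{}}$, and define $\theta(x,y)$ as the $e$-component of $[x,y]^{\tilde{}}$. Write $[d,x]^{\tilde{}}=D(x)+\lambda(x)e$; there is no $d$-component because $[\tilde{\mathfrak g},\tilde{\mathfrak g}]\subset I^{\perp}$. Also $[d,e]=0$. Set $\phi=\tilde\phi|_{V}$, and let $\alpha$, $k'$ be read off from $\tilde\phi_S(d,\cdot)|_V$ and $\tilde\phi_S(d,d)$.

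The main obstacle is that the off-diagonal blocks need not have the required shape. Formula \eqref{df_double} demands that $\tilde\phi_S(d,x)=\tilde\phi_A(d,x)$ for $x\in V$, i.e.\ $\tilde\phi(x,d)=0$ and $\tilde\phi(d,x)=2\alpha(x)$. I plan to obtain this through a change of basis. Replacing $x\in V$ by $x-\mu(x)e$ for a suitable $\mu\in V^*$ changes $\tilde\phi(x,d)$ by $-\mu(x)\tilde\phi(e,d)$, so I can arrange $\tilde\phi(x,d)=0$. This changes $\theta$ and $\lambda$ by coboundary-type terms, but the form stays of the same type. The $d$-entries of $\tilde\phi_A(d,d)$ vanish automatically. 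I may also replace $d$ by $d+v$ with $v\in V$, but one normalization seems to suffice. Mixed entries of the form $\tilde\phi(e,x)$ with $x\in V$ are zero since $V\subset I^{\perp}$.

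With this normal form, the $\tilde\phi$-blocks have exactly the shape of \eqref{df_double}. It remains to derive the identities from \eqref{eq2} for $\tilde\phi$ evaluated on triples:
\begin{itemize}
\item $(x,y,z)$ gives \eqref{eq2} for $\phi$ on $\mathfrak g$;
\item $(x,y,d)$, read in the three forms $\tilde\phi_S(x,[y,d])=\tilde\phi_S(y,[d,x])=\tilde\phi_S(d,[x,y])$, gives \eqref{4-5}, \eqref{4-6} and \eqref{4-8};
\item $(x,d,d)$ gives \eqref{4-7}.
\end{itemize}
Non-degeneracy of $\phi$ on $V$ follows from the block form together with $k\neq\pm l$. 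The facts that $\mathfrak g$ is a Lie algebra, that $D$ is a derivation, that $d\theta=0$, and \eqref{4-4} all come from the Jacobi identity in $\tilde{\mathfrak g}$ projected to the $V$ and $e$ components, using that $e$ is central. I would finally remark that these identities are exactly those verified in the previous proposition, run in reverse, so $(\tilde{\mathfrak g},\tilde\phi)$ is the double extension by $(\theta,D,\lambda,\alpha,k,k',l)$.
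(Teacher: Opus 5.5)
Your strategy is the paper's own. You show $I^{\perp}$ is an ideal using \eqref{eq3} and the centrality of $e$, pick $d\notin I^{\perp}$, split $I^{\perp}=\R\langle e\rangle\oplus V$ with $V$ adjusted by multiples of $e$, and read \eqref{4-5}--\eqref{4-8} off \eqref{eq2} on the triples $(x,y,d)$ and $(x,d,d)$. The paper's complement is $V=\mathfrak{h}^{\perp}_R=\{x\mid\tilde\phi(e,x)=\tilde\phi(d,x)=0\}$.

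However, the normalization you impose is the transpose of the one required. In \eqref{df_double}, the block in row $i$, column $j$ is the value with first argument in the $i$-th summand and second argument in the $j$-th. This is how the paper uses it: in the proof of the proposition preceding Definition~\ref{def_double_extension}, $\tilde\phi_A(e,d)=l$ and $\tilde\phi_A(z,d)=\alpha(z)$ for $z\in\mathfrak g$. So \eqref{df_double} requires $\tilde\phi_A(x,d)=\alpha(x)=\tilde\phi_S(x,d)$ and $\tilde\phi_A(d,x)=-\alpha(x)$, that is,
\[
\tilde\phi(d,x)=0\quad\text{and}\quad\tilde\phi(x,d)=2\alpha(x)\qquad(x\in V),
\]
not $\tilde\phi(x,d)=0$. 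With your choice $\tilde\phi(x,d)=0$ you get $\tilde\phi_A(x,d)=-\alpha(x)$. Then $\tilde\phi$ is not of the form \eqref{df_double} unless $\alpha=0$, and the $(x,y,d)$ instance of \eqref{eq2} yields \eqref{4-8} with $-\alpha([x,y])$ in place of $+\alpha([x,y])$. This is not a vacuous worry. Already for a $3$-dimensional abelian $\tilde{\mathfrak g}$, where \eqref{eq2} imposes nothing, your choice of $V$ generically leaves $\alpha\neq0$. So your closing claim, that the identities are exactly those of the previous proposition, fails as written.

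The repair uses only what you have already proved. Take $\mu(x)=\tilde\phi(d,x)/\tilde\phi(d,e)$, which is legitimate since you showed $\tilde\phi(d,e)\neq0$. Then every $x-\mu(x)e$ satisfies $\tilde\phi(d,x-\mu(x)e)=0$, and the new $V$ is exactly $\mathfrak{h}^{\perp}_R$. Alternatively, keep $V$ and replace $d$ by $d+v$, where $v\in V$ satisfies $\phi(v,\cdot)=-\tilde\phi(d,\cdot)|_V$; such $v$ exists by non-degeneracy of $\phi=\tilde\phi|_V$.

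After this correction, the rest of your argument coincides with the paper's. This includes $d\theta=0$, $D\in\Der(\mathfrak g)$ and \eqref{4-4} from the Jacobi identity, non-degeneracy of $\phi$ from the block form, and the readings of \eqref{eq2}. Your explicit observation that $k+l=\tilde\phi(e,d)$ and $k-l=\tilde\phi(d,e)$ are nonzero supplies the condition $k\neq\pm l$, which the paper leaves implicit.
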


\begin{proof}
Since $e\in c(\tilde{\mathfrak{g}})$, by Lemma \ref{lem1}, $I^{\perp}$ is an ideal of $\tilde{\mathfrak{g}}$.

For any $x\in I^{\perp}$, $\tilde{\phi}_S$ and $\tilde{\phi}_A$ satisfy $\tilde{\phi}_S(x,e)+\tilde{\phi}_A(x,e)=0$ and $\tilde{\phi}_S(e,x)+\tilde{\phi}_A(e,x)=0$.
Since $\tilde{\phi}_S(x,e)=\tilde{\phi}_S(e,x)$ and $\tilde{\phi}_A(x,e)=-\tilde{\phi}_A(e,x)$, we have $\tilde{\phi}_S(x,e)=\tilde{\phi}_S(e,x)=0$ and $\tilde{\phi}_A(x,e)=\tilde{\phi}_A(e,x)=0$.

Take $d\in \tilde{\mathfrak{g}}\setminus I^{\perp}$ and put $\mathfrak{h}={\R}\langle e\rangle \oplus {\R}\langle d \rangle$.
Consider the subspace 
\begin{align*}
\mathfrak{h}^{\perp}_R
&=\{x\in\tilde{\mathfrak{g}}\mid \tilde{\phi}(z,x)=0 \text{ for any }z\in\mathfrak{h} \}\\
&=\{x\in\tilde{\mathfrak{g}}\mid \tilde{\phi}(e,x)=\tilde{\phi}(d,x)=0 \}.
\end{align*}
of $\tilde{\mathfrak{g}}$.
\begin{lem}\label{lem4-1}
The subspace $\mathfrak{h}^{\perp}_R$ is of codimension 2 in $\tilde{\mathfrak{g}}$ and $\tilde{\mathfrak{g}}={\R}\langle e \rangle \oplus \mathfrak{h}^{\perp}_R\oplus {\R} \langle d \rangle$ as a vector space.
\end{lem}

\begin{proof}
Since $\codim I^{\perp}=1$, $\tilde{\mathfrak{g}}=I^{\perp}\oplus {\R}\langle d\rangle$.
Since $\tilde{\phi}$ is non-degenerate and $\tilde{\phi}(e,I^{\perp})=\tilde{\phi}(I^{\perp},e)=0$, we have $\tilde{\phi}(e,d)\not= 0$ and $\tilde{\phi}(d,e)\not= 0$.
Hence $\tilde{\phi}|_{\mathfrak{h}}$ is non-degenerate.

Let $\iota_{\mathfrak{h}}\colon\mathfrak{h}\to\tilde{\mathfrak{g}}$ be the inclusion map and consider the map $\iota_{\mathfrak{h}}^*\circ \tilde{\phi}^{\flat}_{R}\colon \tilde{\mathfrak{g}}\to \mathfrak{h}^*$.
Then $\iota_{\mathfrak{h}}^*\circ \tilde{\phi}^{\flat}_{R}$ is surjective and $\mathfrak{h}^{\perp}_R=\Ker(\iota_{\mathfrak{h}}^*\circ \tilde{\phi}^{\flat}_R)$.
Hence $\codim \mathfrak{h}^{\perp}_R=2$.

Since $\tilde{\phi}|_{\mathfrak{h}}$ is non-degenerate, $\mathfrak{h}\cap \mathfrak{h}^{\perp}_R=\{0\}$.
Therefore we have
$$
\tilde{\mathfrak{g}}=\mathfrak{h}\oplus \mathfrak{h}^{\perp}_R={\R}\langle e\rangle \oplus \mathfrak{h}^{\perp}_R\oplus {\R}\langle d\rangle.
$$ 
\end{proof}

\begin{lem}\label{lem4-2}
$I^{\perp}={\R}\langle e \rangle \oplus \mathfrak{h}^{\perp}_R$.
\end{lem}

\begin{proof}
Consider the subspace
$$
I^{\perp}_R=\{x\in\tilde{\mathfrak{g}}\mid \tilde{\phi}(e,x)=0\}
$$
of $\tilde{\mathfrak{g}}$.
Since $\codim I^{\perp}=1$ and $I^{\perp}\subset I^{\perp}_R$, we have $I^{\perp}_R=I^{\perp}$ or $I^{\perp}_R=\mathfrak{g}$.
Since $\tilde{\phi}(e,d)\not= 0$, we have $d\not\in I^{\perp}_R$ and hence $I^{\perp}_R=I^{\perp}$.
Thus $I\subset \mathfrak{h}$ implies that $\mathfrak{h}^{\perp}_R\subset I^{\perp}_R=I^{\perp}$.

Since $\codim I^{\perp}=1, \codim \mathfrak{h}^{\perp}_R=2, e\in I^{\perp}$, and $e\not\in\mathfrak{h}^{\perp}_R$, we have $I^{\perp}=\mathfrak{h}^{\perp}_R\oplus {\R}\langle e\rangle$.
\end{proof}

By Lemmas \ref{lem4-1} and \ref{lem4-2}, we have $I^{\perp}={\R}\langle e\rangle \oplus \mathfrak{h}^{\perp}_R$ and $\tilde{\mathfrak{g}}={\R}\langle e\rangle \oplus \mathfrak{h}^{\perp}_R\oplus {\R}\langle d\rangle$ as vector spaces.
Hence $\tilde{\phi}$ is of the form 
$$
\tilde{\phi}=\begin{pmatrix} 0&0&*\\ 0 &* &*\\ *&*& *\end{pmatrix}
$$
with respect to the decomposition $\tilde{\mathfrak{g}}={\R}\langle e\rangle \oplus \mathfrak{h}^{\perp}_R\oplus {\R}\langle d\rangle$.
Therefore $\tilde{\phi}|_{\mathfrak{h}^{\perp}_R\times \mathfrak{h}^{\perp}_R}$ is non-degenerate.

Let $\mathfrak{g}$ denote the quotient Lie algebra $I^{\perp}/I$, which is isomorphic to $\mathfrak{h}^{\perp}_R$ as a vector space.
Define a bilinear form $\phi$ on $\mathfrak{g}$ by $\phi(p(x), p(y))=\tilde{\phi}(x,y)$, where $p\colon I^{\perp}\to \mathfrak{g}$ is the projection.
Then $\phi$ is well-defined.
Moreover, $\phi$ is non-degenerate since $\tilde{\phi}|_{\mathfrak{h}^{\perp}_R \times \mathfrak{h}^{\perp}_R}$ is non-degenerate.
The equation \eqref{eq2} for $\tilde{\phi}$ yields the equation \eqref{eq2} for $\phi$ by Lemma \ref{lem1}.
Hence $\phi$ is an NDB structure on $\mathfrak{g}$.

Since $I={\R}\langle e\rangle$ is a central ideal of $I^{\perp}$, there exists a cocycle $\theta\in \bigwedge^2 \mathfrak{g}^*$ such that $I^{\perp}$ is isomorphic to the central extension $\mathfrak{g}_{\theta}$ of $\mathfrak{g}$ by ${\R}\langle e\rangle $.
Under the identification of $I^{\perp}$ with $\mathfrak{g}_{\theta}$, we put $\tilde{D}=\ad(d)\in\Der(\mathfrak{g}_{\theta})$.
Since $e\in c(\tilde{\mathfrak{g}})$ and $\mathfrak{g}_{\theta}={\R}\langle e\rangle \oplus \mathfrak{g}$ as a vector space, $\tilde{D}$ satisfies $\tilde{D}(e)=0$ and $\tilde{D}(x)$ can be written as $\tilde{D}(x)=D(x)+\lambda(x) e$ for $x\in\mathfrak{g}$, where $D\in\mathfrak{gl}(\mathfrak{g})$ and $\lambda\in\mathfrak{g}^*$.
Then $\tilde{\mathfrak{g}}$ is isomorphic to ${\R}\langle d\rangle \ltimes_{\tilde{D}}\mathfrak{g}_{\theta}$.
Since $\tilde{D}\in\Der(\mathfrak{g}_{\theta})$, $D$ and $\lambda$ satisfy the equation \eqref{4-4} and $D\in\Der(\mathfrak{g})$.

Put $k=\tilde{\phi}_S(e,d), \alpha=(\tilde{\phi}_S)^{\flat}_L(d)|_{\mathfrak{g}}, k'=\tilde{\phi}_S(d,d), l=\tilde{\phi}_A(e,d)$, and $\beta=(\tilde{\phi}_A)^{\flat}_L(d)|_{\mathfrak{g}}$.
Since $\tilde{\phi}(d,x)=0$ for $x\in\mathfrak{g}$, we have $\beta=-\alpha$.
Then $\tilde{\phi}=\tilde{\phi}_S+\tilde{\phi}_A$ is of the form
$$
\tilde{\phi}_S=\begin{pmatrix}0&0&k\\0&\phi_S&\alpha\\ k&\alpha&k'\end{pmatrix} \quad \text{and}\quad  \tilde{\phi}_A=\begin{pmatrix}0&0&l\\0&\phi_A&\alpha\\-l &-\alpha &0\end{pmatrix}
$$
with respect to the decomposition $\tilde{\mathfrak{g}}={\R}\langle e\rangle \oplus \mathfrak{g}\oplus {\R}\langle d\rangle$.

Since $\tilde{\phi}_S$ is $\ad$-invariant, $\tilde{\phi}_S$ satisfies
\begin{align*}
&\tilde{\phi}_S([d,x\tilde{]},y)+\tilde{\phi}_S(x,[d,y\tilde{]})=0,\\
&\tilde{\phi}_S([x,y\tilde{]}, d)+\tilde{\phi}_S(y,[x,d\tilde{]})=0, \quad \text{and}\\
&\tilde{\phi}_S([d,x\tilde{]},d)+\tilde{\phi}_S(x, [d,d\tilde{]})=0.
\end{align*}
These equations imply equations \eqref{4-5}, \eqref{4-6}, and \eqref{4-7}.

By the equation \eqref{eq2}, we have
$$
\tilde{\phi}_A([x,y\tilde{]}, d)+\tilde{\phi}_A([y,d\tilde{]}, x)+\tilde{\phi}_A([d,x\tilde{]},y) = \tilde{\phi}_S(x,[y,d\tilde{]}).
$$
This equation implies the equation \eqref{4-8}.

Therefore, $\tilde{\phi}$ is obtained by double extension of $(\mathfrak{g}, \phi)$ by $(\theta, D, \lambda, \alpha, k ,k',l)$.
\end{proof}

Let $\psi$ be a non-degenerate $\ad$-invariant symmetric form on a Lie algebra $\mathfrak{g}$.
Then the double extension $(\tilde{\mathfrak{g}}, \tilde{\psi})$ of $(\mathfrak{g}, \psi)$ by ${\R}$ using a skew-symmetric derivation $D\in\Der(\mathfrak{g}, \psi)$ is defined as follows (see \cite{MedRev85}, \cite{Ova16}):
$\tilde{\mathfrak{g}}={\R}\langle e\rangle \oplus \mathfrak{g} \oplus {\R}\langle d\rangle$ with Lie bracket 
$$
[f_1 e+x_1+s_1 d, f_2 e+x_2+s_2d]^{\tilde{}}=\psi(D(x_1), x_2) e+ [x_1,x_2]+s_1D(x_2)-s_2D(x_1),
$$
where $f_i, s_i\in {\R}$ and $x_i\in\mathfrak{g}$ for $i=1,2$.
The non-degenerate $\ad$-invariant symmetric form $\tilde{\psi}$ on $\tilde{\mathfrak{g}}$ is defined by
$$
\tilde{\psi}(f_1 e+x_1+s_1 d, f_2 e+x_2+s_2d)=\psi(x_1,x_2)+k's_1s_2+ f_1s_2+f_2s_1,
$$
where $k'\in {\R}$.
Hence, $\tilde{\mathfrak{g}}={\R}\langle d\rangle\ltimes_{\tilde{D}}{\mathfrak{g}}_{\theta}$ and
$$
\tilde{\psi}=\begin{pmatrix} 0&0&1\\ 0&\psi&0\\ 1&0&k' \end{pmatrix}
$$
with respect to the vector space decomposition $\tilde{\mathfrak{g}}={\R}\langle e \rangle \oplus \mathfrak{g}\oplus{\R}\langle d \rangle$, where $\theta\in\bigwedge^2\mathfrak{g}^*$ is defined by $\theta(x,y)=\psi(D(x), y)$ and $\tilde{D}\in\Der(\mathfrak{g}_{\theta})$ is defined by $\tilde{D}(x)=D(x)$ for $x\in\mathfrak{g}$ and $\tilde{D}(e)=0$.

In the double extension construction for NDB structures, if we suppose that $\phi=\phi_S$ is $\ad$-invariant but does not necessarily satisfy the equation \eqref{eq2} and if equations \eqref{4-4}, \eqref{4-5}, \eqref{4-6}, and \eqref{4-7} hold, then the symmetric part $\tilde{\phi}_S$ of $\tilde{\phi}$ defined by \eqref{df_double} is an $\ad$-invariant symmetric form on $\tilde{\mathfrak{g}}$.
The double extension of the quadratic Lie algebra $(\mathfrak{g}, \psi)$ by ${\R}$ using $D$ coincides with the pair $(\tilde{\mathfrak{g}}, \tilde{\phi}_S)$ defined above with respect to $(\theta, D, 0,0,1,k',0)$.
 
El Bourkadi and Mansouri \cite{BouMan24Lef} considered the double extension $\tilde{\mathfrak{g}}$ of a Lie algebra $\mathfrak{g}$ by $(\tilde{D}, \theta)$, where $\theta\in\bigwedge^2\mathfrak{g}^*$ is a $2$-cocycle and $\tilde{D}\in\Der(\mathfrak{g}_\theta)$.
For a Lie algebra $\mathfrak{g}$ with a cosymplectic structure $(\alpha,\omega)$, they considered a $1$-form $\tilde{\alpha}$ and a $2$-form $\tilde{\omega}$ on $\tilde{\mathfrak{g}}$ such that $\tilde{\alpha}=\alpha$ on $\mathfrak{g}$, $\tilde{\alpha}(e)=0$,  and $\tilde{\omega}=\omega+d^*\wedge e^*$.
They gave a necessary and sufficient condition for $(\tilde{\alpha}, \tilde{\omega})$ to be a cosymplectic structure on $\tilde{\mathfrak{g}}$ (see Theorem 3.8 in \cite{BouMan24Lef}).
For $(\tilde{\alpha}, \tilde{\omega})$, the associated NDB structure $\tilde{\phi}=\tilde{\phi}_S+\tilde{\phi}_A$ is of the form
$$
\tilde{\phi}_S=\begin{pmatrix} 0&0&0\\ 0&\alpha\otimes\alpha &\tilde{\alpha}(d)\alpha\\0&\tilde{\alpha}(d)\alpha&\tilde{\alpha}(d)^2  \end{pmatrix} \quad \text{and} \quad \tilde{\phi}_A=\begin{pmatrix} 0&0&-1 \\ 0&\omega& 0\\1&0&0\end{pmatrix}
$$
with respect to the vector space decomposition $\tilde{\mathfrak{g}}={\R}\langle e\rangle \oplus\mathfrak{g}\oplus {\R}\langle d \rangle$.
If $\tilde{D}$ is of the form $\tilde{D}=\begin{pmatrix} D&0\\ \lambda &0\end{pmatrix}$ with respect to the decomposition $\mathfrak{g}_{\theta}=\mathfrak{g}\oplus{\R}\langle  e\rangle$ and $\tilde{\alpha}(d)=0$, then the NDB Lie algebra associated with $(\tilde{\alpha}, \tilde{\omega})$ coincides with the double extension of $(\mathfrak{g}, \phi)$ by $(\theta, D, \lambda, 0,0,0,-1)$, where $\phi=\alpha\otimes \alpha+\omega$ is the NDB structure associated with $(\alpha,\omega)$.

\section*{acknowledgement}
This work was supported by JSPS KAKENHI under Grant Number JP26K06819.

\bibliography{reference}

@article{Bor97,
	author = {Bordemann, M.},
	fjournal = {Acta Mathematica Universitatis Comenianae. New Series},
	issn = {0862-9544},
	journal = {Acta Math. Univ. Comen., New Ser.},
	language = {English},
	number = {2},
	pages = {151--201},
	title = {Nondegenerate invariant bilinear forms on nonassociative algebras},
	url = {https://eudml.org/doc/122700},
	volume = {66},
	year = {1997},
	zbl = {1014.17003},
	zbmath = {1332348}}

@misc{Kat25-2,
	archiveprefix = {arXiv},
	author = {Naoki Kato},
	eprint = {2510.14610},
	note = {arXiv:2510.14610},
	primaryclass = {math.RA},
	title = {Infinitely many left-symmetric structures on nilpotent {Lie} algebras},
	url = {https://arxiv.org/abs/2510.14610},
	year = {2025}}

@article{Gic05,
	archiveprefix = {arXiv},
	author = {V. M. Gichev},
	eprint = {math/0512558},
	note = {arXiv:math/0512558},
	primaryclass = {math.DG},
	title = {On complete affine structures in {Lie} groups},
	url = {https://arxiv.org/abs/math/0512558},
	year = {2005}}

@article{Cor16,
	author = {Cornulier, Yves},
	doi = {10.1515/forum-2014-0048},
	issn = {1435-5337},
	journal = {Forum Mathematicum},
	month = Sept,
	number = {1},
	pages = {101--128},
	publisher = {Walter de Gruyter GmbH},
	title = {On the {Koszul} map of {Lie} algebras},
	url = {http://dx.doi.org/10.1515/forum-2014-0048},
	volume = {28},
	year = {2016}}

@article{DarMed96,
	author = {Dardi{\'e}, Jean-Michel and Medina, Alberto},
	doi = {10.1006/aima.1996.0009},
	issn = {0001-8708},
	journal = {Advances in Mathematics},
	number = {2},
	pages = {208--227},
	publisher = {Elsevier BV},
	title = {Double Extension Symplectique d'un Groupe de {Lie} Symplectique},
	url = {http://dx.doi.org/10.1006/aima.1996.0009},
	volume = {117},
	year = {1996}}

@article{BouMan24Lef,
	author = {El Bourkadi, Said and Mansouri, Mohammed W.},
	doi = {10.5802/jolt.1335},
	issn = {0949-5932},
	journal = {Journal of Lie Theory},
	number = {2},
	pages = {249--265},
	publisher = {MathDoc/Centre Mersenne},
	title = {Left-symmetric products on cosymplectic {Lie} algebras},
	url = {http://dx.doi.org/10.5802/jolt.1335},
	volume = {34},
	year = {2024}}

@article{KhaGozMed04,
	author = {Khakimdjanov, Yu. and Goze, M. and Medina, A.},
	doi = {10.1016/j.difgeo.2003.12.006},
	issn = {0926-2245},
	journal = {Differential Geometry and its Applications},
	number = {1},
	pages = {41--54},
	publisher = {Elsevier BV},
	title = {Symplectic or contact structures on {Lie} groups},
	url = {http://dx.doi.org/10.1016/j.difgeo.2003.12.006},
	volume = {21},
	year = {2004}}

@article{Ova16,
	author = {Ovando, Gabriela P.},
	journal = {Rend. Semin. Mat., Univ. Politec. Torino},
	number = {1},
	pages = {243--268},
	title = {Lie algebras with ad-invariant metrics. {A} survey - guide},
	volume = {74},
	year = {2016}}

@article{Bur06,
	author = {Burde, Dietrich},
	doi = {10.2478/s11533-006-0014-9},
	fjournal = {Central European Journal of Mathematics},
	issn = {1895-1074},
	journal = {Cent. Eur. J. Math.},
	language = {English},
	number = {3},
	pages = {323--357},
	title = {Left-symmetric algebras, or pre-{Lie} algebras in geometry and physics},
	volume = {4},
	year = {2006},
	zbl = {1151.17301},
	zbmath = {5209438}}

@article{Chu74,
	author = {Chu, Bon-Yao},
	doi = {10.2307/1996932},
	fjournal = {Transactions of the American Mathematical Society},
	issn = {0002-9947},
	journal = {Trans. Am. Math. Soc.},
	language = {English},
	pages = {145--159},
	title = {Symplectic homogeneous spaces},
	volume = {197},
	year = {1974},
	zbl = {0261.53039},
	zbmath = {3411826}}

@article{Hel79,
	author = {Helmstetter, Jacques},
	doi = {10.5802/aif.764},
	fjournal = {Annales de l'Institut Fourier},
	issn = {0373-0956},
	journal = {Ann. Inst. Fourier},
	language = {French},
	number = {4},
	pages = {17--35},
	title = {Radical d'une alg{\`e}bre sym{\'e}trique {\`a} gauche},
	volume = {29},
	year = {1979},
	zbl = {0403.16020},
	zbmath = {3625581}}

@article{Kim86,
	author = {Kim, Hyuk},
	doi = {10.4310/jdg/1214440553},
	fjournal = {Journal of Differential Geometry},
	issn = {0022-040X},
	journal = {J. Differ. Geom.},
	language = {English},
	pages = {373--394},
	title = {Complete left-invariant affine structures on nilpotent {Lie} groups},
	volume = {24},
	year = {1986},
	zbl = {0591.53045},
	zbmath = {3949235}}

@article{MedRev85,
	author = {Medina, Alberto and Revoy, Philippe},
	doi = {10.24033/asens.1496},
	fjournal = {Annales Scientifiques de l'{\'E}cole Normale Sup{\'e}rieure. Quatri{\`e}me S{\'e}rie},
	issn = {0012-9593},
	journal = {Ann. Sci. {\'E}c. Norm. Sup{\'e}r. (4)},
	language = {French},
	pages = {553--561},
	title = {Alg{\`e}bres de {Lie} et produit scalaire invariant},
	volume = {18},
	year = {1985},
	zbl = {0592.17006},
	zbmath = {3950771}}

@inproceedings{MedRev91,
	author = {Medina, Alberto and Revoy, Philippe},
	booktitle = {{Symplectic Geometry, Groupoids, and Integrable Systems, S{\'e}minaire Sud Rhodanien de G{\'e}om{\'e}trie {\`a} Berkeley}},
	editor = {Pierre Dazord and Alan Weinstein},
	journal = {Symplectic geometry, groupoids, and integrable systems, {S{\'e}min}. {Sud}- {Rhodan}. {Geom}. {VI}, {Berkeley}/{CA} ({USA}) 1989, {Math}. {Sci}. {Res}. {Inst}. {Publ}. 20},
	pages = {247--266},
	publisher = {Springer New York},
	series = {Mathematical Sciences Research Institute Publications},
	title = {Groupes de {Lie} {\`a} structure symplectique invariante},
	year = {1991},
	zbl = {0754.53027},
	zbmath = {16707}}

@article{Ova06,
	author = {Ovando, Gabriela},
	fjournal = {Beitr{\"a}ge zur Algebra und Geometrie},
	issn = {0138-4821},
	journal = {Beitr. Algebra Geom.},
	language = {English},
	number = {2},
	pages = {419--434},
	title = {Four dimensional symplectic {Lie} algebras},
	volume = {47},
	year = {2006},
	zbl = {1155.53042},
	zbmath = {5117812}}

@article{MedSalGir16,
	author = {Medina, A. and Saldarriaga, O. and Giraldo, H.},
	doi = {10.1016/j.jalgebra.2016.02.007},
	fjournal = {Journal of Algebra},
	issn = {0021-8693},
	journal = {J. Algebra},
	language = {English},
	pages = {183--208},
	title = {Flat affine or projective geometries on {Lie} groups},
	volume = {455},
	year = {2016},
	zbl = {1339.57043},
	zbmath = {6566604}}
\par\noindent{\scshape \small
Faculty of Liberal Arts and Sciences,
Chukyo University, \\
101 Tokodachi, Kaizu-cho, Toyota-shi, Aichi 470-0393, Japan.}
\par\noindent{knaoki@lets.chukyo-u.ac.jp}

\end{document}